\newcommand{\vir}[1]{``#1''}
\renewcommand{\bar}{\overline}
\renewcommand{\rho}{\varrho}
\renewcommand{\phi}{\varphi}
\theoremstyle{plain}
\theoremstyle{definition} 
\newtheorem{theorem}{Theorem}[section] 
\newtheorem{cor}{Corollary}[section] 
\newtheorem{lem}{Lemma}[section] 
\newtheorem{prop}{Proposition}[section]
\newcommand{\Ra}{\mathsf{R}}
\DeclareMathOperator{\esssup}{ess\,sup}
\title{Existence and uniqueness of slightly compressible Boussinesq's flow in Darcy-Bénard problem}
\author{  
G. Arnone\href{https://orcid.org/0000-0002-3317-6358}{\includegraphics[scale=0.1]{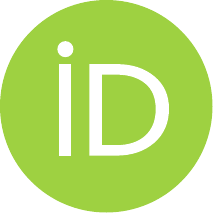}} \\ Dipartimento di Matematica e Applicazioni 'R.Caccioppoli' \\ Università degli Studi di Napoli Federico II \\ Via Cintia, Monte S.Angelo, 80126 Napoli \\ Italy \\   
\texttt{giuseppe.arnone@unina.it} \\ 
\And
F. Capone\thanks{Corresponding author.} \href{https://orcid.org/0000-0002-0672-999X}{\includegraphics[scale=0.1]{orcid}} \\ Dipartimento di Matematica e Applicazioni 'R.Caccioppoli' \\ Universit\'a degli Studi di Napoli Federico II \\ Via Cintia, Monte S.Angelo, 80126 Napoli \\ Italy \\ 
\texttt{fcapone@unina.it} }
\begin{document}
\maketitle

\begin{abstract}
In the present paper, we study the existence, uniqueness and behaviour in time of the solutions to the Darcy-Bénard problem for an extended-quasi-thermal-incompressible fluid-saturated porous medium uniformly heated from below. Unlike the classical problem, where the compressibility factor of the fluid vanishes, in this paper we allow the fluid to be slightly compressible and we address the well-posedness analysis for the full nonlinear initial boundary value problem for the perturbed system of governing equations modelling the convection in porous media phenomenon. 
\end{abstract}
\keywords{Porous media \and Boussinesq approximation \and Well-posedness \and Compressibility}

\section{Introduction}

Thermal convection phenomena in Newtonian fluid-saturated porous media heated from below, are in essence physically motivated by buoyant force induced from density variations due to the presence of a thermal gradient along the layer. Indeed, in non-isothermal processes variations in temperature generate variations in the fluid's properties, e.g. density and viscosity. An analysis including the full effect of these variations is so complicated that some approximations become essential. The vast majority studies concerning the stability of basic steady state motions in porous media, see e.g. \cite{stabilityandwavemotion2004,NieldBejan2017} and referenced therein, are addressed under a celebrated hypothesis, known as the \emph{Boussinesq}, or \emph{Oberbeck-Boussinesq} OB \emph{approximation}, \cite{oberbeck1879,boussinesq1903}. In the words of Boussinesq himself, the approximation assume that: \vir{\emph{The variations of density can be ignored except where they are multiplied by the acceleration of gravity in equation of motion for the vertical component of the velocity vector}}, \cite{boussinesq1903}. The crucial consequence of this assumption, called in 1916 by Lord Rayleigh \emph{Boussinesq approximation}, see \cite{Ray1916}, is the possibility to deal with a \emph{quasi}-incompressible system of coupled dynamic and thermal equations, where the buoyancy is the main driving force. It must be noted that the underlying nature of the OB approximation had attracted considerable attention among scholars. In particular, the topic has been discussed in depth in the thermodynamic and continuum mechanics frameworks and those insights can be found in some brilliant papers like e.g. \cite{Muller1985,GouMuracRogers2011,gouin2012consistent} and \cite{rajagopal1996oberbeck,rajagopal2009oberbeck,rajagopal2015approximation}, respectively. While we shall not go into a detailed discussion of any of the above papers, a discursive summary of the crucial contribution in the thermodynamic framework can be concisely addressed. In his 1985 monograph, M\"uller proved that as long as one assume as \emph{incompressible} a medium whose constitutive equations do not depend on pressure, then the only density function that wouldn't be at odd with the Gibbs law is a constant density. This conclusion is in disagreement with with empirical observations, according to which fluids expands when heated, and the theoretical assumptions such as the OB approximation. For this reason the above fact was called, in \cite{GouMuracRogers2011}, \emph{M\"uller paradox}. In order to fix this contradiction, a less restrictive model of incompressibility was proposed by Gouin, Muracchini and Ruggeri in \cite{GouMuracRogers2011}. In this paper the Authors proposed an alternative model which requires that \emph{the only} constitutive function independent of the pressure is the density and as a result the Gibbs equation is satisfied as long as the pressure involved in the process is below  a critical pressure value. This material was named \emph{quasi-thermal-incompressible medium}. This incompressibility model was obtained as a limit process justifying the compatibility between incompressibility and the Gibbs relation under a specific limitation on the pressure value involved in the process. However, we must stress that quasi-thermal incompressibility does not characterize a real compressible material for which the chemical potential must be a concave function of the pressure and temperature. In this regard, Gouin and Ruggeri in \cite{gouin2012consistent}, enforcing some essential thermodynamic conditions (namely \emph{entropy principle} and \emph{thermodynamic stability}) introduce the class of \emph{extended-quasi-thermal-incompressible fluids}, and they proposed as a significant case to take into account the following density law:
\begin{equation}\label{seitu}
	 \rho(p,T)=\rho_0[1-\alpha(T-T_0)+\beta(p-p_0)], 
\end{equation}
where $ p $ and $ T $ are the pressure and temperature fields, respectively, $ \rho_0 $, $ T_0 $ and $ p_0 $ are the reference density, temperature and pressure value, respectively, and $ \alpha $ and $ \beta $ are the thermal expansion coefficient, and compressibility factor, defined respectively by
\begin{equation}\label{key}
	 \alpha=\dfrac{1}{V}V_T,\quad \beta=-\dfrac{1}{V}V_p. 
\end{equation}
One consequence of introducing this more comprehensive scheme is that the well-posedness of the corresponding mathematical model requires the pressure to be treated as an \emph{independent} unknown instead as a Lagrange multiplier associated with the incompressibility constraint. For this reason the pressure will satisfy a suitable elliptic problem and will be subjected to Robin boundary conditions. The well-posedness and stability of convective solutions in the classical Bénard problem for slightly compressible fluids is addressed in \cite{corli2019benard,de2019existence,passerini2020benard}. Moreover, let us note that in recent time, the constitutive density \eqref{seitu} attracted considerable attention in the framework of hydrodynamic stability, in particular because the extra buoyancy term gives a more accurate description of fluids which are slightly compressible. From recent investigations addressed in the contest of thermal convection problems in clear fluids and fluid-saturated porous media, see e.g. \cite{passerini2014benard,corli2019benard}, it is proved that the compressibility factor has a \emph{destabilizing effect} on the onset of convection. \\
The objective of the present paper is to investigate the well-posedness of the initial-boundary value problem modelling the nonlinear bi-dimensional perturbation of the steady state solution of a slightly compressible fluid-saturated porous medium. More specifically, the paper is organized as follows. In section \ref{eu1} the Darcy-Bénard problem for slightly incompressible fluid-saturated porous media is introduced, and the steady state solution as well as the perturbed non-dimensional system are computed. Moreover, the Poisson pressure equation is introduced, together with the corresponding Robin boundary conditions, and a suitable change of variable is employed. In section \ref{seipropriotu}, after after recalling some previous findings and known inequalities, we face with the proof of the existence and uniqueness of the nonlinear perturbed problem, taking advantage of several usual analytical techniques, e.g. derivation of \emph{a priori} estimates and use of the Galerkin method \cite{evans2022partial} with a suitable basis. Moreover, for \vir{sufficiently small} Rayleigh numbers all solutions must decay exponentially to zero as time increase, proving the nonlinear stability stability of the conduction solution.

\section{Formulation of the initial boundary value problem}\label{eu1}

Let us consider a reference frame $Oxyz$ with fundamental unit vectors $\{ \textbf{i},\textbf{j},\textbf{k}\}$ and a horizontal layer $\Omega=\mathbb{R}^2 \times [0,d]$ of fluid-saturated porous medium, whose boundary will be indicated by $ \partial \Omega=\Gamma=\Gamma_L\cup \Gamma_U $. To derive the governing equations for the seepage velocity $\textbf{v}$, the temperature field $T$ and the pressure field $p$, let us employ the modified Oberbeck-Boussinesq approximation, \cite{gouin2012consistent}: 
\begin{itemize}
	\item the fluid density $\rho$ is constant in all terms of the governing equations (i.e. $\rho=\rho_0$), except in the bouyancy term;
	\item in the body force term, the fluid density is \eqref{seitu};
	\item $\nabla \cdot \textbf{v}=0$ and $\textbf{D} : \textbf{D} \approx 0$.
\end{itemize}
Now, the mathematical model, according to the Darcy's law, is given by
\begin{equation}\label{syst1x}
	\begin{cases}
		\dfrac{\mu}{K} \textbf{v}= - \nabla p -\rho_0 [1-\alpha (T-T_0) + \beta (p-p_0)] g \textbf{k} \\ \nabla \cdot \textbf{v} = 0 \\ \rho c_V \Bigl( \dfrac{\partial T}{\partial t} + \textbf{v} \cdot \nabla T \Bigr) = \chi \Delta T
	\end{cases}
\end{equation}
where $\mu, K, c_a, \chi, c_V$ are fluid viscosity, permeability of the porous body, acceleration coefficient, thermal diffusivity and specific heat at constant volume, respectively. To system (\ref{syst1x}) the following initial conditions
\begin{equation}\label{IC1}
	\textbf{v}(\textbf{x},0)=\textbf{v}_0(\textbf{x}),\quad T(\textbf{x},0)=T_0(\textbf{x}),\quad p(\textbf{x},0)=p_0(\textbf{x}),
\end{equation}
and boundary conditions:
\begin{equation}\label{BC1x}
	\begin{aligned}
		\textbf{v} \cdot \textbf{k} =0 \quad & \text{on} \ z=0,d \\
		T=T_L  \quad & \text{on} \ z=0 \\
		T=T_U  \quad & \text{on} \ z=d 
	\end{aligned}
\end{equation}
with $T_L>T_U$, are appended. Conditions \eqref{BC1x}$ _1 $ tells us that the boundaries are impermeable, while condition \eqref{BC1x}$ _{2,3} $ that the boundaries are isothermal and we assume that the boundaries are isobaric too. Moreover, we can assume free boundary condition (i.e. boundary free from tangential stress).

\subsection{Steady state and perturbed non-dimensional formulation}

System (\ref{syst1x})-(\ref{BC1x}) admits the stationary conduction solution 
\begin{equation}
	\begin{aligned}
		&  \textbf{v}_b=\bm{0}, \ T_b(z)=T_L-\dfrac{T_L-T_U}{d} z, \\
		& p_b(z)=p_0+\bar{p}e^{-\rho_0g\beta z}+\dfrac{1}{\beta^2}\dfrac{\alpha(T_L-T_U)}{\rho_0 g d}\left(1-e^{-\rho_0g\beta z}\right)\\
		&\qquad\qquad\qquad\qquad\qquad\qquad\qquad-\dfrac{1}{\beta}\left(\dfrac{\alpha(T_L-T_U)}{d}z+1-e^{-\rho_0g\beta z}\right).
	\end{aligned}
\end{equation}
where $ p_0 $ is the gauge pressure and $ \bar{p} $ is a prescribed value arising from the appropriate boundary conditions on $ p $.\\
\\
Let $(\textbf{u},\theta,\pi)$ be a perturbation to the basic solution ($ \textbf{u}=\textbf{v}-\textbf{v}_b $, $ \theta=T-T_b $ and $ \pi=p-p_b $), with $ \textbf{u}=(u,v,w) $. Then, the equations governing the perturbation fields are
\begin{equation}\label{pert1x}
	\begin{cases}
		\dfrac{\mu}{K} \textbf{u} = - \nabla \pi + \rho_0 \alpha g \theta  \textbf{k} -  \rho_0 \beta g \pi  \textbf{k} \\ \nabla \cdot \textbf{u} = 0 \\ \dfrac{\partial \theta}{\partial t} + \textbf{u} \cdot \nabla \theta = \dfrac{T_L-T_U}{d} \textbf{u} \cdot \textbf{k} + k \Delta \theta
	\end{cases}
\end{equation}
where $k=\frac{\chi}{\rho c_V}$. To system (\ref{pert1x}) the following initial conditions
\begin{equation}\label{IC2}
	\textbf{u}(\textbf{x},0)=\textbf{u}_0(\textbf{x}),\quad \theta(\textbf{x},0)=\theta_0(\textbf{x}),\quad \pi(\textbf{x},0)=\pi_0(\textbf{x}),
\end{equation}
and boundary conditions are appended:
\begin{equation}\label{BC3}
	\begin{aligned}
		\textbf{u} \cdot \textbf{k} =0 \quad & \text{on} \ z=0,d \\
		\theta=0  \quad & \text{on} \ z=0,d.
	\end{aligned}
\end{equation}
Then system \eqref{syst1x} is fulfilled by the null perturbation. Let us introduce the following scales 
$$\pi=P \pi^*, \quad \textbf{u}=U \textbf{u}^*, \quad \theta=T^{\#} \theta^*, \quad t=\tau t^*, \quad x=dx^*, $$
where
\[ P=\dfrac{\mu k}{K},\quad U=\dfrac{k}{d},\quad T^\#=T_L-T_U, \quad \tau=\dfrac{d^2}{k}. \]
Therefore, the corresponding dimensionless system of equations, omitting all the stars, is the following:

\begin{equation}\label{nondimsysx}
	\begin{cases}
		\textbf{u}=-\nabla\pi+\Ra \theta \textbf{k}-\widehat{\beta}\pi \textbf{k}\\
		\nabla \cdot \textbf{u}=0\\
		\dfrac{\partial \theta}{\partial t}+\textbf{u}\cdot \nabla \theta=w+\Delta \theta
	\end{cases}
\end{equation}
where
\[ \Ra=\dfrac{\rho_0\alpha g d(T_L-T_U)K}{\mu k}, \quad \widehat{\beta}=\rho_0gd\beta  \]
are the Darcy-Rayleigh number and the dimensionless compressibility factor, respectively. To system \eqref{nondimsysx} the following boundary conditions are appended:

\begin{equation}\label{nondimbcx}
	w=\theta=0\quad \text{on}\; z=0,1.
\end{equation}

\subsection{The Poisson pressure equation}

It is well known that the pressure field in case of incompressible flow can be recognized as the Lagrangian constraint variable that enforces the divergence-free constraint. In this new scheme, the well-posedness of the associated mathematical problem requires now the pressure field to be treated as an \emph{independent unknown}, satisfying a suitable elliptic problem and subject to Robin boundary conditions. Indeed, by taking the divergence of \eqref{nondimsysx}$ _1 $ we have
\[ \nabla \cdot \textbf{u}=-\nabla \cdot \nabla\pi+\Ra \nabla \cdot \theta \textbf{k}-\widehat{\beta}\nabla \cdot \pi \textbf{k}, \]
hence
\begin{equation}\label{PPE}
	\Delta \pi+\widehat{\beta}\dfrac{\partial \pi}{\partial z}=\Ra \dfrac{\partial \theta}{\partial z},
\end{equation}
whose boundary conditions are, following \cite{sani2006pressure}
\[ \dfrac{\partial p}{\partial \textbf{n}}\Bigl|_{\Gamma}=\nabla \pi\cdot \textbf{n}=\nabla \pi\cdot \textbf{k}=(-\textbf{u}+\Ra\theta\textbf{k}-\widehat{\beta}\pi\textbf{k})\cdot \textbf{k}\Bigl|_{\Gamma}, \]
hence the \emph{natural} boundary conditions for the pressure Poisson problem \eqref{PPE} are of Robin type
\begin{equation}\label{RBC}
	\dfrac{\partial \pi}{\partial z}+\widehat{\beta}\pi=0.
\end{equation}
Therefore, we replace the continuity constraint \eqref{nondimsysx} with the derived pressure Poisson equation \eqref{PPE}, so we obtain
\begin{equation}\label{mainsystem}
	\begin{cases}
		\textbf{u}=-\nabla\pi+\Ra \theta \textbf{k}-\widehat{\beta}\pi \textbf{k}\\
		\Delta \pi+\widehat{\beta}\dfrac{\partial \pi}{\partial z}=\Ra \dfrac{\partial \theta}{\partial z}\\
		\dfrac{\partial \theta}{\partial t}+\textbf{u}\cdot \nabla \theta=w+\Delta \theta
	\end{cases}
\end{equation}
with associated boundary conditions
\begin{equation}\label{key}
	w=\theta=\dfrac{\partial \pi}{\partial z}+\widehat{\beta}\pi=0\quad \text{on}\; z=0,1.
\end{equation}
If we consider the change of variable $ \Pi=e^{\widehat{\beta}z}\pi $, then system \eqref{mainsystem} turns into
\begin{equation}\label{mainsystemchange}
	\begin{cases}
		\textbf{u}=-e^{-\widehat{\beta}z}\nabla\Pi+\Ra \theta \textbf{k}\\
		\Delta \Pi-\widehat{\beta}\dfrac{\partial \Pi}{\partial z}=\Ra e^{-\widehat{\beta}z} \dfrac{\partial \theta}{\partial z}\\
		\dfrac{\partial \theta}{\partial t}+\textbf{u}\cdot \nabla \theta=w+\Delta \theta
	\end{cases}
\end{equation}
with initial conditions
\begin{equation}\label{ICx}
	\textbf{u}(\textbf{x},0)=\textbf{u}_0(\textbf{x}),\quad \theta(\textbf{x},0)=\theta_0(\textbf{x}),\quad \Pi(\textbf{x},0)=\Pi_0(\textbf{x}),	
\end{equation}
and the Robin condition for the pressure turned into Neumann conditions
\begin{equation}\label{NBC}
	w=\theta=\dfrac{\partial \Pi}{\partial z}=0\quad \text{on}\; z=0,1.
\end{equation}
We are now in a full coupling contest and note that $ t $ is just a parameter for the elliptic problem \eqref{mainsystemchange}$ _2 $ and \eqref{NBC}. \\
\\
In the sequel we will focus on bi-dimensional perturbations in the plane $ (x,z) $ and we assume the periodicity of the perturbation in the $ x $-direction.

\section{Well-posedness of the problem}\label{seipropriotu}

Let us consider the full-nonlinear system

\begin{equation}\label{system2ABC}
	\begin{cases}
		\Delta \Pi-\widehat{\beta}\Pi_z=\mathsf{Ra} e^{\widehat{\beta}z} \theta_z\\
		\textbf{u}=-e^{-\widehat{\beta}z}\nabla\Pi+\Ra\theta \textbf{k}\\
		\theta_t+\textbf{v}\cdot \nabla \theta=w+\Delta \theta
	\end{cases}
\end{equation}
together with boundary conditions \eqref{NBC}. In the following sections we will give some preliminary notation and preliminaries in order step by step prove the existence and uniqueness of weak solutions of \eqref{system2ABC}. 

\subsection{Some notation and preliminaries}

In the following, since we are interested in bi-dimensional flow, we will denote
\begin{equation}\notag
	\Omega_0=\{(x,y)\in [0,1]\times [0,1]\}.
\end{equation}
Let us consider $ \mathcal{B}=\{\phi^i_{m,n}\}_{m,n\geq 0} $ such that
\begin{equation}\label{baseB}
	\phi^i_{m,n}(x,z)=\begin{cases}
		\cos(2\pi mx)\cos(\pi nz),\quad i=1,\\
		\sin(2\pi mx)\cos(\pi nz),\quad i=-1,
	\end{cases}
\end{equation}
and $ \mathcal{D}=\{\xi^i_{m,n}\}_{m,n\geq 0} $ such that
\begin{equation}\label{baseD}
	\xi^i_{m,n}(x,z)=\begin{cases}
		\cos(2\pi mx)\sin(\pi nz),\quad i=1,\\
		\sin(2\pi mx)\sin(\pi nz),\quad i=-1.
	\end{cases}	
\end{equation}
Note that the functions in the basis $ \mathcal{B} $ have mean value zero, so we can write $ p $ in this basis. Moreover, for temperature field $ \theta $ and the stream function $ \Phi $ associated with $ \textbf{v} $ by $ \textbf{u}=(-\Phi_z,\Phi_x) $ we can use the basis $ \mathcal{D} $. We can construct Sobolev space from basis \eqref{baseB} and \eqref{baseD}. In particular, we denote by $ \widetilde{W}^{k,2}(\Omega_0) $ as the closure with respect to the $ W^{k,2} $-norm of the finite combinations of elements of the basis $ \mathcal{B} $ and by $ \widehat{W}^{k,2}(\Omega_0) $ as the closure with respect to the $ W^{k,2} $-norm of the finite combinations of elements of the basis $ \mathcal{D} $, $ k=1,2 $. Moreover, we denote by $ \widehat{\mathcal{W}}^{1,2}(\Omega_0) $ the closure of the linear hull of the vectorial divergence-free functions obtained from $ \mathcal{D} $. Let us recall the Poincaré inequality for $ \Pi $ and $ \theta $:
\begin{equation}\notag
	\|\Pi\|_{L^2}\leq \dfrac{1}{2\pi}\|\nabla\Pi\|_{L^2},\quad \|\theta\|_{L^2}\leq \dfrac{1}{\sqrt{5}\pi}\|\nabla\theta\|_{L^2},
\end{equation}
as well as the Ladyzhenskaya's inequality
\begin{equation}\notag
	\|\textbf{v}\|_{L^4}\leq c \|\textbf{v}\|_{L^2}^{1/2}\|\nabla\textbf{v}\|_{L^2}^{1/2}
\end{equation}
with $ c $ positive constant, and the following equivalence of norms, \cite{de2019existence}
\begin{equation}\notag
	\dfrac{1}{16}\|\Delta \textbf{v}\|_{L^2}\leq \|D^2\textbf{v}\|_{L^2}\leq \dfrac{1}{4}\|\Delta \textbf{v}\|_{L^2}.
\end{equation}

\subsection{Existence and uniqueness for the reduced problem}

As a fist result, let us recall the following theorem proving existence and uniqueness for the reduced problem for the pressure. 

\begin{theorem}\label{reducedproblem}
	Let $ f\in \widetilde{L}^2(\Omega_0) $ and assume $ 0\leq \widehat{\beta}<2\pi $. Then problem
	\begin{equation}\label{redprob}
		\begin{cases}
			\Delta \Pi-\widehat{\beta}\Pi_z=e^{\widehat{\beta}z}f,\quad \text{in}\;\Omega\\
			\Pi_z(x,0)=\Pi_z(x,1)=0,\quad \text{in}\;x\in\mathbb{R},
		\end{cases} 
	\end{equation}
	admits a unique solution $ \Pi\in \widetilde{W}^{2,2}(\Omega_0) $. Moreover, $ \Pi $ satisfies the following estimates
	\begin{equation}\label{estimeciao}
		\|\nabla\Pi\|_{L^2}\leq \dfrac{1}{2\pi-\widehat{\beta}}\left\|e^{\widehat{\beta}z}f\right\|_{L^2},\quad \|\Delta\Pi\|_{L^2}\leq \dfrac{2\pi}{2\pi-\widehat{\beta}}\left\|e^{\widehat{\beta}z}f\right\|_{L^2}. 
	\end{equation}
\end{theorem}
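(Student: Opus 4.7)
The plan is to recast \eqref{redprob}$_1$ in divergence form through the identity
\begin{equation*}
\Delta\Pi-\widehat{\beta}\Pi_z \;=\; e^{\widehat{\beta}z}\,\nabla\!\cdot\!\bigl(e^{-\widehat{\beta}z}\nabla\Pi\bigr),
\end{equation*}
so that the problem is equivalent to $\nabla\!\cdot\!(e^{-\widehat{\beta}z}\nabla\Pi)=f$ with $x$-periodicity and the Neumann data on $z=0,1$. Existence and uniqueness of a weak solution in $\widetilde{W}^{1,2}(\Omega_0)$ would then follow from Lax--Milgram applied to the symmetric bilinear form $a(\Pi,\phi)=\int_{\Omega_0}e^{-\widehat{\beta}z}\nabla\Pi\cdot\nabla\phi$: it is clearly continuous and, by the lower bound $e^{-\widehat{\beta}z}\ge e^{-\widehat{\beta}}>0$ combined with the Poincaré inequality on the mean-zero class, coercive. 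The compatibility $\int_{\Omega_0}f=0$ is built into the hypothesis $f\in\widetilde{L}^2(\Omega_0)$. Alternatively, a Galerkin scheme using the basis $\mathcal{B}$, which already encodes both boundary conditions, produces the same solution, and standard elliptic regularity lifts it to $\widetilde{W}^{2,2}(\Omega_0)$.

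For the gradient bound, I would test \eqref{redprob}$_1$ against $\Pi$ and integrate over $\Omega_0$. The $x$-periodicity together with $\Pi_z=0$ on $z=0,1$ annihilates every boundary contribution in $\int_{\Omega_0}\Pi\Delta\Pi=-\|\nabla\Pi\|_{L^2}^2$, leaving
\begin{equation*}
\|\nabla\Pi\|_{L^2}^2 \;=\; -\widehat{\beta}\int_{\Omega_0}\Pi\,\Pi_z \;-\;\int_{\Omega_0} e^{\widehat{\beta}z}f\,\Pi.
\end{equation*}
Cauchy--Schwarz on both terms, the pointwise bound $\|\Pi_z\|_{L^2}\le\|\nabla\Pi\|_{L^2}$, and the Poincaré inequality $\|\Pi\|_{L^2}\le\tfrac{1}{2\pi}\|\nabla\Pi\|_{L^2}$ produce
\begin{equation*}
\|\nabla\Pi\|_{L^2}^2 \;\le\; \tfrac{\widehat{\beta}}{2\pi}\|\nabla\Pi\|_{L^2}^2 + \tfrac{1}{2\pi}\|e^{\widehat{\beta}z}f\|_{L^2}\,\|\nabla\Pi\|_{L^2},
\end{equation*}
and the hypothesis $\widehat{\beta}<2\pi$ makes the coefficient $1-\widehat{\beta}/(2\pi)$ strictly positive, so that rearranging yields the sharp constant $1/(2\pi-\widehat{\beta})$ of \eqref{estimeciao}$_1$.

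The same philosophy with $-\Delta\Pi$ as test function gives the Laplacian bound: Cauchy--Schwarz on
\begin{equation*}
\|\Delta\Pi\|_{L^2}^2 \;=\; \widehat{\beta}\int_{\Omega_0}\Pi_z\,\Delta\Pi + \int_{\Omega_0} e^{\widehat{\beta}z}f\,\Delta\Pi
\end{equation*}
immediately produces $\|\Delta\Pi\|_{L^2}\le\widehat{\beta}\|\nabla\Pi\|_{L^2}+\|e^{\widehat{\beta}z}f\|_{L^2}$, and substituting the first estimate collapses the right-hand side to $\bigl(\tfrac{\widehat{\beta}}{2\pi-\widehat{\beta}}+1\bigr)\|e^{\widehat{\beta}z}f\|_{L^2}=\tfrac{2\pi}{2\pi-\widehat{\beta}}\|e^{\widehat{\beta}z}f\|_{L^2}$, which is \eqref{estimeciao}$_2$. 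Uniqueness is a free by-product: the difference of two candidate solutions solves the same problem with $f=0$, so \eqref{estimeciao}$_1$ forces $\nabla\Pi=0$, and the mean-zero membership in $\widetilde{W}^{2,2}(\Omega_0)$ then gives $\Pi\equiv 0$.

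The hard point I anticipate is the handling of the non-self-adjoint first-order term $\widehat{\beta}\Pi_z$: the cross term $\int_{\Omega_0}\Pi\,\Pi_z$ is not killed by the Neumann condition alone (it reduces to the generally nonzero boundary integral $\tfrac{1}{2}\int_{0}^{1}[\Pi^2]_{z=0}^{z=1}\,dx$), so it must be absorbed via Cauchy--Schwarz and Poincaré rather than by a direct cancellation. This is precisely where the threshold $\widehat{\beta}<2\pi$ is both necessary and sharp for the displayed constants, and it also explains why the divergence-form reformulation is essential for setting up a clean variational existence argument.
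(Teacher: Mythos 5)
Your proof is correct, but it is genuinely different from what the paper does: the paper offers no argument at all for Theorem~\ref{reducedproblem} and simply cites \cite{de2019existence}, whereas you supply a complete, self-contained derivation. Your route --- rewriting $\Delta\Pi-\widehat{\beta}\Pi_z$ as $e^{\widehat{\beta}z}\nabla\cdot(e^{-\widehat{\beta}z}\nabla\Pi)$, getting existence from Lax--Milgram on the mean-zero class (note that coercivity, via $e^{-\widehat{\beta}z}\ge e^{-\widehat{\beta}}$, needs no restriction on $\widehat{\beta}$; the threshold $\widehat{\beta}<2\pi$ enters only in the estimates), and then testing against $\Pi$ and $\Delta\Pi$ --- reproduces exactly the constants $\tfrac{1}{2\pi-\widehat{\beta}}$ and $\tfrac{2\pi}{2\pi-\widehat{\beta}}$ of \eqref{estimeciao}, and your observation that $\int_{\Omega_0}\Pi\,\Pi_z$ survives the Neumann condition and must be absorbed by Cauchy--Schwarz and Poincar\'e is the right mechanism. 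What your approach buys is transparency about where each hypothesis is used; what the citation buys the authors is brevity. One caveat you should flag: your gradient estimate leans on the Poincar\'e inequality $\|\Pi\|_{L^2}\le\tfrac{1}{2\pi}\|\nabla\Pi\|_{L^2}$ exactly as stated in the paper, but for the space $\widetilde{W}^{1,2}(\Omega_0)$ built from the basis $\mathcal{B}$ the $x$-independent mode $\cos(\pi z)$ gives $\|\Pi\|_{L^2}=\tfrac{1}{\pi}\|\nabla\Pi\|_{L^2}$, so the constant $\tfrac{1}{2\pi}$ holds only if such modes are excluded; this affects the sharpness of the displayed constants (and of the threshold $2\pi$), though it is an issue inherited from the paper's own setup rather than a flaw in your argument.
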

\begin{proof}
	A proof of the following theorem can be found in \cite{de2019existence}. 
\end{proof}

\subsection{Basic \emph{a priori} estimate}

In this section we will prove a fundamental \emph{a priori} estimate. Let us set

\begin{equation}\label{key}
	E(t):=\dfrac{\Ra}{2}\dfrac{d}{dt}\|\theta(t)\|^2_{L^2}.
\end{equation}
Then the following proposition holds:

\begin{prop}\label{proposizione1}
	The following \emph{a priori} estimate holds:
	\begin{equation}\label{key}
		\dfrac{\Ra}{2}\dfrac{d}{dt}\|\theta\|^2+\Ra\|\nabla\theta\|^2_{L^2}-A\|\textbf{u}\|^2_{L^2}-B\|\nabla\Pi\|^2_{L^2}\leq c_0(\Ra,\widehat{\beta})\|\theta\|^2_{L^2},
	\end{equation}
	where $ c_0>0 $, $ A,B<0 $, and in particular 
	\begin{equation}\label{davide}
		E(t)\leq E(0)e^{c_0 t}. 
	\end{equation}
	Moreover, if $ \Ra $ is sufficiently small, then $ E(t) $ decays exponentially. 
\end{prop}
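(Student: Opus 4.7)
The plan is a straightforward energy estimate on the temperature equation, preceded by the verification that the induced velocity is solenoidal. Taking the divergence of the Darcy relation $\textbf{u}=-e^{-\widehat{\beta}z}\nabla\Pi+\Ra\theta\textbf{k}$ and substituting the Poisson equation \eqref{system2ABC}$_1$, one obtains
\begin{equation*}
\nabla\cdot\textbf{u}=-e^{-\widehat{\beta}z}(\Delta\Pi-\widehat{\beta}\Pi_z)+\Ra\theta_z=0;
\end{equation*}
combined with the impermeability $w|_{z=0,1}=0$ and the $x$-periodicity built into $\widehat{\mathcal{W}}^{1,2}(\Omega_0)$, an integration by parts kills the convective term $\int_{\Omega_0}(\textbf{u}\cdot\nabla\theta)\,\theta=\tfrac{1}{2}\int_{\Omega_0}\textbf{u}\cdot\nabla(\theta^2)=0$.

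Multiplying \eqref{system2ABC}$_3$ by $\Ra\theta$, integrating over $\Omega_0$ and integrating by parts in the Laplacian (using $\theta|_{z=0,1}=0$) then yields the energy identity
\begin{equation*}
\frac{\Ra}{2}\frac{d}{dt}\|\theta\|_{L^2}^2+\Ra\|\nabla\theta\|_{L^2}^2=\Ra\int_{\Omega_0}w\,\theta.
\end{equation*}
The $\|\textbf{u}\|^2$ and $\|\nabla\Pi\|^2$ terms in the stated estimate arise from a two-parameter Young splitting of the right-hand side. I would write $\Ra\int w\theta=\lambda\Ra\int w\theta+(1-\lambda)\Ra\int w\theta$ with $\lambda\in(0,1)$: on the first piece, Cauchy--Schwarz together with $|w|\leq|\textbf{u}|$ and Young's inequality produces a term $\eta_1\|\textbf{u}\|_{L^2}^2+c_1\|\theta\|_{L^2}^2$; on the second piece, substituting $w=-e^{-\widehat{\beta}z}\Pi_z+\Ra\theta$ from Darcy's law and using $0\le e^{-\widehat{\beta}z}\le 1$ gives $\eta_2\|\nabla\Pi\|_{L^2}^2+c_2\|\theta\|_{L^2}^2$ plus an elementary $(1-\lambda)\Ra^2\|\theta\|_{L^2}^2$ from the diagonal. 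Collecting everything produces the displayed inequality with $A=-\eta_1$, $B=-\eta_2$ and $c_0=c_1+c_2+(1-\lambda)\Ra^2$ depending on $\Ra$, $\widehat{\beta}$, $\lambda$ and the Young constants.

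For the Gr\"onwall conclusion, since $-A,-B\geq 0$ the corresponding contributions on the left are nonnegative and may be discarded, leaving $\frac{\Ra}{2}\frac{d}{dt}\|\theta\|_{L^2}^2\leq c_0\|\theta\|_{L^2}^2$; integration gives $E(t)\leq E(0)e^{c_0 t}$ interpreting $E(t)$ as the natural energy $\tfrac{\Ra}{2}\|\theta(t)\|_{L^2}^2$ (and absorbing a factor $2/\Ra$ into the constant). For the exponential-decay assertion, I would instead retain the coercive term $\Ra\|\nabla\theta\|^2$ and invoke the Poincar\'e inequality $\|\theta\|_{L^2}^2\leq(5\pi^2)^{-1}\|\nabla\theta\|_{L^2}^2$ recalled in the preliminaries: whenever $c_0(\Ra,\widehat{\beta})<5\pi^2\Ra$ the $\|\theta\|^2$ term on the right is dominated by the coercive term on the left, giving $\frac{d}{dt}\|\theta\|^2\leq-c\|\theta\|^2$ for some $c>0$. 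Since the $c_0$ produced by the Young splitting is quadratic in $\Ra$ while the coercivity is linear, this smallness condition holds for every $\Ra$ below an explicit threshold depending on $\widehat{\beta}$.

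The main subtlety I anticipate is the calibration of $\lambda,\eta_1,\eta_2$, which jointly fix $c_0$ and hence the decay threshold for $\Ra$. The sharper bound $\|\nabla\Pi\|_{L^2}\leq(\Ra e^{\widehat{\beta}}/(2\pi-\widehat{\beta}))\|\nabla\theta\|_{L^2}$ from Theorem \ref{reducedproblem} is not strictly needed for this \emph{a priori} estimate, but can be used to make $c_0$ more explicit and to enlarge the admissible range of Rayleigh numbers.
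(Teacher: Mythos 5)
Your reduction to a single energy identity for $\theta$ is where the argument diverges from what the proposition actually asserts, and the divergence is not cosmetic. In the paper the terms $-A\|\textbf{u}\|^2_{L^2}$ and $-B\|\nabla\Pi\|^2_{L^2}$ with $A,B<0$ are \emph{coercive} contributions: they originate from testing the pressure equation \eqref{system2ABC}$_1$ with $\Pi$ and the Darcy relation \eqref{system2ABC}$_2$ with $\textbf{u}$, which places $-\|\nabla\Pi\|^2$ and $-\|\textbf{u}\|^2$ on the right of the summed identity \eqref{mattia}; after Young's inequality on the cross terms the net coefficients $A$ and $B$ remain negative under the stated restrictions on $M,M_1,M_2$, so that $-A\|\textbf{u}\|^2-B\|\nabla\Pi\|^2\geq 0$ and may be discarded before Gr\"onwall. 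In your scheme these two terms are manufactured from the single forcing term $\Ra\int w\theta$ by Young's inequality, and Young can only ever deliver them with a \emph{positive} coefficient on the right-hand side; your derivation actually yields
\begin{equation*}
\dfrac{\Ra}{2}\dfrac{d}{dt}\|\theta\|^2+\Ra\|\nabla\theta\|^2_{L^2}\leq \eta_1\|\textbf{u}\|^2_{L^2}+\eta_2\|\nabla\Pi\|^2_{L^2}+c_0\|\theta\|^2_{L^2},
\end{equation*}
which corresponds to $A=\eta_1>0$, $B=\eta_2>0$, the opposite sign to the one claimed (your identification $A=-\eta_1$ flips the sign when the terms are moved across the inequality). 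This is a strictly weaker statement than the proposition, not a proof of it.

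The weaker inequality also does not close as you present it: with $\eta_1\|\textbf{u}\|^2+\eta_2\|\nabla\Pi\|^2$ sitting uncontrolled on the right, Gr\"onwall does not apply. To repair it you must invoke precisely the elliptic estimate of Theorem \ref{reducedproblem} that you declare ``not strictly needed'', namely $\|\nabla\Pi\|_{L^2}\leq \tfrac{\Ra e^{\widehat{\beta}}}{2\pi-\widehat{\beta}}\|\nabla\theta\|_{L^2}$ together with $\|\textbf{u}\|_{L^2}\leq\|\nabla\Pi\|_{L^2}+\Ra\|\theta\|_{L^2}$, and then absorb the resulting $\|\nabla\theta\|^2$ contributions into $\Ra\|\nabla\theta\|^2$ by taking $\eta_1,\eta_2$ small. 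That repaired argument would recover the Gr\"onwall bound \eqref{davide} and the small-$\Ra$ decay (your use of the Poincar\'e constant $5\pi^2$ there is correct, as is the preliminary verification that $\nabla\cdot\textbf{u}=0$ kills the convective term), but it still would not establish the estimate in the stated form, with coercive velocity and pressure-gradient terms on the left. To obtain those you must test all three equations and sum, as the paper does.
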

\begin{proof}
Multiplying \eqref{system2ABC}$ _1 $ by $ \Pi $, \eqref{system2ABC}$ _2 $ by $ \textbf{u} $ and \eqref{system2ABC}$ _3 $ by $ \Ra\theta $, after integrating over the periodicity cell, applying Cauchy-Schwartz and generalized Young inequalities and summing the resulting equations we get
	\begin{equation}\label{mattia}
		\begin{split}
			\dfrac{\Ra}{2}\dfrac{d}{dt}\|\theta\|^2&=-\|\nabla\Pi\|^2_{L^2}-\widehat{\beta}(\Pi_z,\Pi)-\Ra(e^{\widehat{\beta}z}\theta_z,\Pi)\\
			&\quad\quad\quad\qquad\qquad -\|\textbf{u}\|^2+2\Ra(\theta,w)-(e^{-\widehat{\beta}z}\nabla\Pi,\textbf{u})-\Ra\|\nabla\theta\|^2_{L^2},
		\end{split}
	\end{equation}
and it follows from \eqref{mattia} that
	\begin{equation}\label{francis}
		\begin{split}
			\dfrac{\Ra}{2}\dfrac{d}{dt}\|\theta\|^2+\Ra\|\nabla\theta\|^2_{L^2}-A\|\textbf{u}\|^2_{L^2}-B\|\nabla\Pi\|^2_{L^2}\leq c_0(\Ra,\widehat{\beta})\|\theta\|^2_{L^2}	
		\end{split}
	\end{equation}
	with
	\begin{equation}\label{key}
		\begin{split}
			A:&=-1+\dfrac{1}{2M_1}+\dfrac{M_2}{2},\\
			B:&=-1+\dfrac{\widehat{\beta}}{2\pi}+\dfrac{1}{2M}\left(\dfrac{1}{2\pi}+1\right)+\dfrac{1}{2M_2},\\
			c_0:&=\dfrac{\Ra^2e^{2\widehat{\beta}}}{2}(\widehat{\beta}^2+1)M+2\Ra^2M_1.
		\end{split}
	\end{equation}
	If we ask $ A<0 $ and $ B<0 $, the following restrictions hold
	\begin{equation}\label{key}
		\begin{split}
			M_1&>\dfrac{1}{2-M_2},\\
			M_2&\in \left(\dfrac{\pi}{2\pi-\widehat{\beta}},2\right),\\
			M&>\dfrac{(2\pi+1)M_2}{2(2\pi-\widehat{\beta}-2\pi)}.
		\end{split}
	\end{equation}
	As a consequence, from \eqref{francis}, we have
	\begin{equation}\label{key}
		\dfrac{d}{dt}E(t)\leq c_0(\Ra,\widehat{\beta})E(t).
	\end{equation}
	and the Gr\"onwall inequality gives us \eqref{davide}. Moreover, setting
	\begin{equation}\label{key}
		\hat{c}_0(\Ra,\widehat{\beta}):=\Ra\left(\dfrac{\Ra e^{2\widehat{\beta}}}{2}(\widehat{\beta}^2+1)M+2\Ra M_1\right),
	\end{equation}
	from \eqref{francis} it follows
	\begin{equation}\label{key}
		\begin{split}
			\dfrac{\Ra}{2}\dfrac{d}{dt}\|\theta(t)\|^2&\leq -\Ra \|\nabla\theta\|^2_{L^2}+c_0(\widehat{\beta},\Ra)\|\theta\|^2_{L^2}\\
			&\leq -10\pi^2 \dfrac{\Ra}{2}\|\theta\|^2_{L^2}+2\hat{c}_0(\Ra,\widehat{\beta})\dfrac{\Ra}{2}\|\theta\|^2_{L^2}\\
			&=\left( -\widetilde{c}_1+\widetilde{c}_0 \right)\|\theta\|^2_{L^2},
		\end{split}
	\end{equation}
	where $ \widetilde{c}_1=10\pi^2 $ and $ \widetilde{c}_0=2\hat{c}_0 $, i.e.
	\begin{equation}\label{key}
		\dfrac{d}{dt}E(t)\leq (\widetilde{c}_0-\widetilde{c}_1)E(t).
	\end{equation}
	In conclusion, from a Gr\"onwall type inequality we have
	\begin{equation}\label{key}
		E(t)\leq E(0)^{ (\widetilde{c}_0-\widetilde{c}_1)t},
	\end{equation}
	and if $ \Ra $ is sufficiently small so that $ \widetilde{c}_0-\widetilde{c}_1<0 $, i.e. if
	\begin{equation}\label{key}
		\Ra<\dfrac{10\pi^2}{e^{2\widehat{\beta}}(\widehat{\beta}^2+1)+4M_1},
	\end{equation}
	then $ E $ decays exponentially, and this completes the proof. 
\end{proof}

\subsection{Preliminary results}

In this section we will give several preliminary results, which will make the final proof easier to demonstrate. 

\begin{lem}\label{lemma1}
	The sequence $ \{\theta^N\}_{N} $ is bounded in $ L^\infty\bigl(0,T;\widetilde{L}^2(\Omega_0)\bigr) $, i.e. it exists $ c_1>0 $ such that for all $ N $
	\[ \esssup_t\|\theta^N(t)\|^2_{L^2}\leq c_1. \]
\end{lem}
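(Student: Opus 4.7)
The plan is to obtain the bound by running the energy computation from Proposition \ref{proposizione1} directly at the Galerkin level, and then invoking a standard Grönwall argument together with the stability of the projection of the initial datum.

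First I would fix the Galerkin setup: for each $N$, let $\theta^N$ be the $N$-th Galerkin approximation of $\theta$ constructed using the first $N$ elements of the basis $\mathcal{D}$, with $\Pi^N \in \widetilde{W}^{2,2}(\Omega_0)$ and $\mathbf{u}^N \in \widehat{\mathcal{W}}^{1,2}(\Omega_0)$ recovered from $\theta^N$ through the elliptic problem \eqref{redprob} (with $f=\theta^N_z$) and the algebraic relation \eqref{system2ABC}$_2$, whose solvability and a priori estimates are guaranteed by Theorem \ref{reducedproblem}. In particular $\mathbf{u}^N$ is divergence-free and satisfies $w^N=0$ on $z=0,1$, so that the nonlinear convective contribution $(\mathbf{u}^N\cdot\nabla\theta^N,\theta^N)$ vanishes by integration by parts, which is the key algebraic fact that allows the energy method to close.

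Next I would repeat verbatim the argument of Proposition \ref{proposizione1} applied to the triple $(\mathbf{u}^N,\theta^N,\Pi^N)$: multiply the Galerkin version of \eqref{system2ABC}$_1$ by $\Pi^N$, of \eqref{system2ABC}$_2$ by $\mathbf{u}^N$, and of \eqref{system2ABC}$_3$ by $\mathsf{Ra}\,\theta^N$, integrate over $\Omega_0$, and sum. Since the test functions belong to the finite-dimensional subspace in which the equations are solved, all integrations by parts are legitimate, and the inequality \eqref{francis} reads identically
\begin{equation*}
\dfrac{\Ra}{2}\dfrac{d}{dt}\|\theta^N\|^2_{L^2}+\Ra\|\nabla\theta^N\|^2_{L^2}-A\|\mathbf{u}^N\|^2_{L^2}-B\|\nabla\Pi^N\|^2_{L^2}\leq c_0(\Ra,\widehat{\beta})\|\theta^N\|^2_{L^2}.
\end{equation*}
Discarding the nonnegative gradient terms (recall $A,B<0$ with the proper choice of the free parameters $M,M_1,M_2$) one obtains
\begin{equation*}
\dfrac{d}{dt}\|\theta^N(t)\|^2_{L^2}\leq 2c_0(\Ra,\widehat{\beta})\|\theta^N(t)\|^2_{L^2}/\Ra,
\end{equation*}
which by Grönwall gives $\|\theta^N(t)\|^2_{L^2}\leq \|\theta^N(0)\|^2_{L^2}\,e^{\tilde c\, t}$ for an appropriate constant $\tilde c$ independent of $N$.

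Finally, since $\theta^N(0)$ is the orthogonal projection in $L^2(\Omega_0)$ of $\theta_0$ onto the finite-dimensional subspace spanned by the first $N$ elements of $\mathcal{D}$, one has $\|\theta^N(0)\|_{L^2}\leq \|\theta_0\|_{L^2}$ for every $N$. Taking the essential supremum over $t\in[0,T]$ yields
\begin{equation*}
\esssup_{t\in[0,T]}\|\theta^N(t)\|^2_{L^2}\leq \|\theta_0\|^2_{L^2}\,e^{\tilde c\, T}=:c_1,
\end{equation*}
which is the desired uniform bound. The only genuinely non-routine step is verifying that the cancellation $(\mathbf{u}^N\cdot\nabla\theta^N,\theta^N)=0$ truly holds at the Galerkin level; this requires checking that the basis $\mathcal{D}$ together with the reconstruction of $\mathbf{u}^N$ via Theorem \ref{reducedproblem} preserves both $\nabla\cdot\mathbf{u}^N=0$ and $w^N|_{z=0,1}=0$, which is the reason the bases $\mathcal{B}$ and $\mathcal{D}$ were chosen compatibly in \eqref{baseB}--\eqref{baseD}. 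Once this is in place, the rest is just Grönwall and the contractivity of the $L^2$ projection.
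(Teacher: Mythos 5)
Your proposal is correct and follows essentially the same route as the paper: the paper's proof simply invokes the a priori estimate $E^N(t)\leq E^N(0)e^{c_0t}$ of Proposition \ref{proposizione1} at the Galerkin level, bounds $\|\theta^N(0)\|_{L^2}\leq\|\theta(0)\|_{L^2}$, and passes to the $\esssup$. The only difference is that you spell out the steps the paper leaves implicit (the legitimacy of the energy computation for the truncated system, the cancellation of the convective term, and the contractivity of the $L^2$ projection), which is a useful but not substantively different elaboration.
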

\begin{proof}
	From the \emph{a priori} estimates given in Proposition \ref{proposizione1}, it follows
	\begin{equation}\label{key}
		E^N(t)\leq E^N(0)e^{c_0t},
	\end{equation}
	hence
	\begin{equation}\label{key}
		\begin{split}
			\dfrac{\Ra}{2}\|\theta^N(t)\|^2_{L^2}\leq \dfrac{\Ra}{2}\|\theta^N(0)\|^2_{L^2}e^{c_0t}\leq \dfrac{\Ra}{2}\|\theta(0)\|^2_{L^2}e^{c_0T}=c_1.
		\end{split}
	\end{equation}
	Then, passing to the $ \esssup $ the Lemma is proved. 
\end{proof}

\begin{lem}\label{lemma2}
	The sequence $ \{\theta^N\}_{N} $ is bounded in $ L^2\bigl(0,T;\widetilde{L}^2(\Omega_0)\bigr) $
\end{lem}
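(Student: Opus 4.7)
The lemma is an immediate corollary of Lemma~\ref{lemma1}: a uniform $L^\infty$-in-time bound on $\|\theta^N(\cdot)\|^2_{L^2}$ automatically yields a uniform $L^2$-in-time bound on any finite time interval. My plan is therefore simply to promote the pointwise-in-time bound already proved to an integral one by using the fact that $[0,T]$ has finite length, so no fresh differential inequality is needed.

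Concretely, I would first invoke Lemma~\ref{lemma1} to write $\|\theta^N(t)\|^2_{L^2}\leq c_1$ for almost every $t\in[0,T]$, with $c_1=c_1(\Ra,\widehat{\beta},T,\|\theta(0)\|_{L^2})$ independent of $N$. Next I would integrate over the time interval,
$$\int_0^T\|\theta^N(t)\|^2_{L^2}\,dt\;\leq\;T\cdot\esssup_{t\in[0,T]}\|\theta^N(t)\|^2_{L^2}\;\leq\;Tc_1,$$
and take the square root to obtain $\|\theta^N\|_{L^2(0,T;\widetilde{L}^2(\Omega_0))}\leq\sqrt{Tc_1}$, uniformly in $N$, which is the claim.

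There is no genuine obstacle: the content of the lemma is essentially the continuous embedding $L^\infty(0,T;X)\hookrightarrow L^2(0,T;X)$ applied to $X=\widetilde{L}^2(\Omega_0)$. I would expect, however, that the genuinely useful estimate in the subsequent Galerkin passage to the limit will be the companion $L^2$-in-time, $\widehat{W}^{1,2}$-in-space bound on $\{\theta^N\}_N$, and this too is within reach from Proposition~\ref{proposizione1}: since $A,B<0$, the terms $-A\|\textbf{u}^N\|^2_{L^2}$ and $-B\|\nabla\Pi^N\|^2_{L^2}$ are nonnegative and may be discarded, leaving
$$\frac{\Ra}{2}\frac{d}{dt}\|\theta^N\|^2_{L^2}+\Ra\|\nabla\theta^N\|^2_{L^2}\leq c_0\|\theta^N\|^2_{L^2}.$$
Integrating this in time over $[0,T]$ and bounding the right-hand side by Lemma~\ref{lemma1} would then furnish the uniform $L^2(0,T;\widehat{W}^{1,2}(\Omega_0))$ estimate typically required to extract a weakly convergent subsequence.
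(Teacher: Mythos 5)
Your proof is correct and coincides with the paper's: both simply integrate the uniform $L^\infty$-in-time bound from Lemma~\ref{lemma1} over the finite interval $[0,T]$ to get $\int_0^T\|\theta^N(t)\|^2_{L^2}\,dt\leq Tc_1$. The additional remark about the gradient bound is accurate but is not part of this lemma; it is precisely the content of Lemma~\ref{lemma3} in the paper.
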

\begin{proof}
	It directly follows from Lemma \ref{lemma1} that
	\[ \|\theta^N\|^2_{L^2(0,T;\widetilde{L}^2(\Omega_0))}=\int_0^T \|\theta^N(t)\|^2_{L^2}dt\leq T c_1. \]
\end{proof}

\begin{lem}\label{lemma3}
	The sequence $ \{\nabla\theta^N\}_{N} $ is bounded in $ L^2\bigl(0,T;\widetilde{L}^2(\Omega_0)\bigr) $
\end{lem}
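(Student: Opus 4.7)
The plan is to extract this $L^2(0,T;\widetilde{L}^2(\Omega_0))$ bound on the gradient directly from the basic \emph{a priori} inequality of Proposition \ref{proposizione1}, applied to the Galerkin approximants. I would start by writing the inequality for the $N$-th approximation $(\theta^N,\mathbf{u}^N,\Pi^N)$:
\begin{equation*}
\frac{\Ra}{2}\frac{d}{dt}\|\theta^N\|^2_{L^2}+\Ra\|\nabla\theta^N\|^2_{L^2}-A\|\mathbf{u}^N\|^2_{L^2}-B\|\nabla\Pi^N\|^2_{L^2}\leq c_0(\Ra,\widehat{\beta})\|\theta^N\|^2_{L^2}.
\end{equation*}
Since by construction $A,B<0$, the two terms $-A\|\mathbf{u}^N\|^2_{L^2}$ and $-B\|\nabla\Pi^N\|^2_{L^2}$ on the left-hand side are nonnegative, and may simply be discarded.

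The next step is to integrate the reduced inequality on $(0,T)$:
\begin{equation*}
\frac{\Ra}{2}\|\theta^N(T)\|^2_{L^2}+\Ra\int_0^T\|\nabla\theta^N(t)\|^2_{L^2}\,dt\leq \frac{\Ra}{2}\|\theta^N(0)\|^2_{L^2}+c_0\int_0^T\|\theta^N(t)\|^2_{L^2}\,dt.
\end{equation*}
Dropping the nonnegative term $\tfrac{\Ra}{2}\|\theta^N(T)\|^2_{L^2}$ on the left and dividing by $\Ra$ gives an upper bound on $\int_0^T\|\nabla\theta^N\|^2_{L^2}\,dt$ in terms of $\|\theta^N(0)\|^2_{L^2}$ and $\int_0^T\|\theta^N\|^2_{L^2}\,dt$.

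To conclude, I would invoke Lemma \ref{lemma2}, which gives $\int_0^T\|\theta^N(t)\|^2_{L^2}\,dt\leq Tc_1$ uniformly in $N$, and observe that $\|\theta^N(0)\|^2_{L^2}\leq \|\theta(0)\|^2_{L^2}$ because $\theta^N(0)$ is (by the Galerkin construction) the orthogonal projection of $\theta(0)$ onto a finite-dimensional subspace spanned by elements of $\mathcal{D}$. Combining these bounds yields a constant $c_2>0$, independent of $N$, such that
\begin{equation*}
\|\nabla\theta^N\|^2_{L^2(0,T;\widetilde{L}^2(\Omega_0))}=\int_0^T\|\nabla\theta^N(t)\|^2_{L^2}\,dt\leq c_2,
\end{equation*}
which is the claim. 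There is no real obstacle here: once the positive lower-order terms in Proposition \ref{proposizione1} have been dropped and Lemma \ref{lemma2} is in hand, the argument reduces to a standard integration and a Galerkin-projection estimate on the initial data.
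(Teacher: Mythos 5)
Your proposal is correct and follows essentially the same route as the paper: integrate the \emph{a priori} estimate of Proposition \ref{proposizione1} for the Galerkin approximants after discarding the nonnegative terms $-A\|\textbf{u}^N\|^2_{L^2}$ and $-B\|\nabla\Pi^N\|^2_{L^2}$, then invoke Lemma \ref{lemma2} together with the bound $\|\theta^N(0)\|_{L^2}\leq\|\theta(0)\|_{L^2}$ coming from the orthogonal projection of the initial datum. If anything, your write-up is cleaner, since it avoids the paper's ambiguous use of $E(t)$ (defined as a time derivative but treated as $\tfrac{\Ra}{2}\|\theta\|^2_{L^2}$) by integrating the energy identity explicitly.
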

\begin{proof}
	From Proposition \ref{proposizione1}, we have
	\begin{equation}\label{key}
		\dfrac{\Ra}{2}\dot{E}^N(t)+\Ra\|\nabla\theta^N(t)\|^2_{L^2}\leq c_0(\Ra,\widehat{\beta})\|\theta^N(t)\|^2_{L^2}.
	\end{equation}
	As a consequence
	\begin{equation}\label{key}
		\int_0^T\Ra \|\nabla\theta^N(t)\|_{L^2}^2dt\leq \int_0^Tc_0\|\theta^N(t)\|_{L^2}^2dt-\dfrac{\Ra}{2}\int_0^T\dot{E}^N(t)\,dt
	\end{equation}
	then, from Lemma \ref{lemma2}, it follows
	\begin{equation}\label{key}
		\begin{split}
			\Ra\|\nabla\theta^N\|_{L^2(0,T;\widetilde{L}^2(\Omega_0))}^2&\leq Tc_0c_1-\dfrac{\Ra}{2}\bigl(E^n(t)-E^n(0)\bigr)\\
			&\leq T c_0c_1+\dfrac{\Ra}{2}E^N(0)\\
			&\leq \Ra\left(\dfrac{T}{\Ra}c_0c_1+\dfrac{1}{2}\|\theta(0)\|^2_{L^2}\right)\\
			&=\Ra c_2
		\end{split}
	\end{equation}
	i.e.
	\begin{equation}\label{key}
		\|\nabla\theta^N\|_{L^2(0,T;\widetilde{L}^2(\Omega_0))}^2=c_2.
	\end{equation}
\end{proof}

\begin{lem}\label{lemma4}
	Let $ \gamma>0 $. Then, the following estimate holds:
	\begin{equation}\label{key}
		\|w^N(t)\|^2_{L^2}\leq \gamma \|\theta^N(t)\|^2_{W^{1,2}}.
	\end{equation}
\end{lem}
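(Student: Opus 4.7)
The plan is to express $w^{N}$ directly in terms of $\theta^{N}$ and $\Pi^{N}$ via the second equation of \eqref{system2ABC}, and then invoke Theorem \ref{reducedproblem} to bound the pressure gradient by a norm of $\theta^{N}$. Concretely, since the Galerkin approximants satisfy $\textbf{u}^{N}=-e^{-\widehat{\beta}z}\nabla\Pi^{N}+\Ra\,\theta^{N}\textbf{k}$, dotting with $\textbf{k}$ gives
\begin{equation}\notag
  w^{N}(t)=-e^{-\widehat{\beta}z}\,\Pi^{N}_{z}(t)+\Ra\,\theta^{N}(t).
\end{equation}
Squaring and integrating over $\Omega_{0}$, and using the elementary inequality $(a+b)^{2}\le 2a^{2}+2b^{2}$ together with $e^{-\widehat{\beta}z}\le 1$ on $[0,1]$, I obtain
\begin{equation}\notag
  \|w^{N}(t)\|_{L^{2}}^{2}\le 2\|\Pi^{N}_{z}(t)\|_{L^{2}}^{2}+2\Ra^{2}\|\theta^{N}(t)\|_{L^{2}}^{2}\le 2\|\nabla\Pi^{N}(t)\|_{L^{2}}^{2}+2\Ra^{2}\|\theta^{N}(t)\|_{L^{2}}^{2}.
\end{equation}

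Next I control the pressure term using Theorem \ref{reducedproblem}. For each $t$, the approximant $\Pi^{N}(t)$ solves the reduced elliptic problem \eqref{redprob} with forcing $f^{N}=\Ra\,\theta^{N}_{z}(t)\in\widetilde{L}^{2}(\Omega_{0})$, so the first estimate in \eqref{estimeciao} yields
\begin{equation}\notag
  \|\nabla\Pi^{N}(t)\|_{L^{2}}\le\frac{1}{2\pi-\widehat{\beta}}\,\|e^{\widehat{\beta}z}\Ra\,\theta^{N}_{z}(t)\|_{L^{2}}\le\frac{\Ra\,e^{\widehat{\beta}}}{2\pi-\widehat{\beta}}\,\|\theta^{N}_{z}(t)\|_{L^{2}}.
\end{equation}
Since $\|\theta^{N}_{z}(t)\|_{L^{2}}\le\|\nabla\theta^{N}(t)\|_{L^{2}}$ and both $\|\theta^{N}(t)\|_{L^{2}}^{2}$ and $\|\nabla\theta^{N}(t)\|_{L^{2}}^{2}$ are dominated by $\|\theta^{N}(t)\|_{W^{1,2}}^{2}$, substituting back gives
\begin{equation}\notag
  \|w^{N}(t)\|_{L^{2}}^{2}\le\left(\frac{2\Ra^{2}e^{2\widehat{\beta}}}{(2\pi-\widehat{\beta})^{2}}+2\Ra^{2}\right)\|\theta^{N}(t)\|_{W^{1,2}}^{2},
\end{equation}
which is the desired inequality with $\gamma:=2\Ra^{2}\bigl(e^{2\widehat{\beta}}/(2\pi-\widehat{\beta})^{2}+1\bigr)>0$. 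The estimate is uniform in $N$ because neither Theorem \ref{reducedproblem} nor the elementary pointwise bounds depend on the truncation level.

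There is no real obstacle here beyond bookkeeping: the only subtle point is verifying that the Galerkin pressure $\Pi^{N}$ truly satisfies the reduced problem with data $\Ra\,\theta^{N}_{z}$ (so that Theorem \ref{reducedproblem} applies pointwise in $t$), which follows from the fact that the basis $\mathcal{B}$ for $\Pi$ and $\mathcal{D}$ for $\theta$ are compatible with the boundary conditions $\Pi_{z}=0$ on $z=0,1$ and that $\theta^{N}_{z}$ is a finite combination of elements of $\mathcal{B}$ (hence lies in $\widetilde{L}^{2}(\Omega_{0})$). Everything else reduces to Young's inequality and the hypothesis $0\le\widehat{\beta}<2\pi$ needed to keep $\gamma$ finite.
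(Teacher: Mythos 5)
Your proof is correct and follows essentially the same route as the paper: express $w^N$ through the momentum equation, bound $\|\nabla\Pi^N\|_{L^2}$ via Theorem \ref{reducedproblem} with forcing $\Ra\,\theta^N_z$, and absorb everything into the $W^{1,2}$-norm of $\theta^N$. If anything, your bookkeeping is more careful than the paper's, which omits the factor $2$ from $(a+b)^2\le 2a^2+2b^2$ and the squares on $\Ra$ and on $(2\pi-\widehat{\beta})$ when passing to $\gamma$.
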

\begin{proof}
	It follows directly from Theorem \ref{reducedproblem}, setting $ \gamma=\max\left\{ \tfrac{e^{2\widehat{\beta}}}{2\pi -\widehat{\beta}},\Ra \right\} $, that
	\begin{equation}\label{key}
		\begin{split}
			\|w^N(t)\|^2_{L^2}&\leq \|\nabla\Pi^N\|^2_{L^2}+\Ra\|\theta^N\|^2_{L^2}\\
			&\leq \tfrac{1}{2\pi -\widehat{\beta}}\bigl\|e^{\widehat{\beta}z}\theta_z^2(t)\bigr\|^2_{L^2}+\Ra\|\theta^N(t)\|^2_{L^2}\\
			&\leq \tfrac{e^{2\widehat{\beta}}}{2\pi -\widehat{\beta}}\bigl\|\nabla\theta^2(t)\bigr\|^2_{L^2}+\Ra\|\theta^N(t)\|^2_{L^2}\\
			&\leq \gamma \|\theta^N(t)\|^2_{W^{1,2}}.
		\end{split}
	\end{equation}
\end{proof}

\begin{lem}\label{lemma5}
	The sequence $ \{\dot{\theta}^N\}_{N} $ is bounded in $ L^2\bigl(0,T;(\widetilde{W}^{1,2}(\Omega_0))^*\bigr) $
\end{lem}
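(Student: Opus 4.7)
I would test the Galerkin form of \eqref{system2ABC}$_3$ against an arbitrary $\phi$ in the unit ball of $\widetilde{W}^{1,2}(\Omega_0)$ and bound the resulting right-hand side by a quantity whose square is integrable on $(0,T)$. Since $\dot\theta^N$ lies at each time in the $N$-dimensional Galerkin subspace, testing against $\phi$ is equivalent to testing against its $L^2$-orthogonal projection $P_N\phi$; the basis $\mathcal{D}$ simultaneously diagonalises the $L^2$ and $W^{1,2}$ inner products, hence $\|P_N\phi\|_{W^{1,2}}\le\|\phi\|_{W^{1,2}}$. After integrating the Laplacian by parts, the Galerkin equation takes the form
\begin{equation*}
\langle\dot\theta^N,\phi\rangle=-(\mathbf{u}^N\cdot\nabla\theta^N,P_N\phi)+(w^N,P_N\phi)-(\nabla\theta^N,\nabla P_N\phi),
\end{equation*}
and I must bound each of the three summands separately in the dual norm.

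The two linear contributions are handled by the preceding lemmas. The dissipative term is immediately majorised by $\|\nabla\theta^N\|_{L^2}\|\phi\|_{W^{1,2}}$, whose time $L^2$-norm is controlled by Lemma \ref{lemma3}. The buoyancy term satisfies $|(w^N,P_N\phi)|\le\sqrt{\gamma}\,\|\theta^N\|_{W^{1,2}}\|\phi\|_{L^2}$ thanks to Lemma \ref{lemma4}, and its time $L^2$-norm is then finite by Lemmas \ref{lemma1} and \ref{lemma3} combined.

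The convective term is the main obstacle. Taking the divergence of \eqref{system2ABC}$_2$ and substituting \eqref{system2ABC}$_1$ shows that $\nabla\cdot\mathbf{u}^N=0$, so integration by parts followed by Ladyzhenskaya's inequality yields
\begin{equation*}
|(\mathbf{u}^N\cdot\nabla\theta^N,P_N\phi)|=|(\mathbf{u}^N\theta^N,\nabla P_N\phi)|\le C\|\mathbf{u}^N\|_{L^2}^{1/2}\|\nabla\mathbf{u}^N\|_{L^2}^{1/2}\|\theta^N\|_{L^2}^{1/2}\|\nabla\theta^N\|_{L^2}^{1/2}\|\phi\|_{W^{1,2}}.
\end{equation*}
Coupling Theorem \ref{reducedproblem} with the norm equivalence $\|D^2\Pi^N\|_{L^2}\le\tfrac{1}{4}\|\Delta\Pi^N\|_{L^2}$ converts velocity norms into temperature norms, namely $\|\mathbf{u}^N\|_{L^2}+\|\nabla\mathbf{u}^N\|_{L^2}\le C(\|\theta^N\|_{L^2}+\|\nabla\theta^N\|_{L^2})$.

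The delicate point is the time integrability of the resulting bound, which on its face would demand more than $L^2(0,T;H^1)$ on $\theta^N$. I would resolve this through the two-dimensional embedding $L^\infty(0,T;L^2)\cap L^2(0,T;H^1)\hookrightarrow L^4(0,T;L^4)$, applied to $\theta^N$ via Lemmas \ref{lemma1} and \ref{lemma3}, and—after the splitting $\mathbf{u}^N=\Ra\theta^N\mathbf{k}-e^{-\widehat{\beta}z}\nabla\Pi^N$—to the buoyancy component $\Ra\theta^N\mathbf{k}$ as well; the remaining elliptic component is controlled in $L^2(0,T;L^4)$ by Theorem \ref{reducedproblem} together with the Sobolev embedding $H^1\hookrightarrow L^4$ in two dimensions. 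These ingredients place the product $\mathbf{u}^N\theta^N$ in $L^2(0,T;L^2(\Omega_0))$, hence the convective term in $L^2(0,T;(\widetilde{W}^{1,2})^*)$. Summing the three bounds and taking the supremum over the unit ball of $\widetilde{W}^{1,2}$ yields the desired $N$-uniform estimate.
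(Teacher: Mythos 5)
Your handling of the two linear contributions coincides with the paper's own argument: the paper also reduces to the Galerkin projection of the test function (via the decomposition $v=v_1+v_2$ with $v_2\perp\bar V_N$), integrates the Laplacian by parts, and bounds $(w^N,v_1)$ and $(\nabla\theta^N,\nabla v_1)$ through Lemma \ref{lemma4} and Lemmas \ref{lemma2}--\ref{lemma3}. The essential difference is that the paper's proof contains \emph{no convective term at all} --- it writes $(\dot\theta^N,v_1)=(w^N,v_1)+(\Delta\theta^N,v_1)$, i.e.\ it is working with the linear Galerkin equation, consistent with the fact that this lemma feeds into Proposition \ref{proposizione4} on the \emph{linear} version of \eqref{system2ABC}. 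So for the statement as the paper actually uses it, your extra work is not needed, and the part of your proof that is needed is correct and essentially identical to the paper's.

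Your attempt to also control $(\mathbf{u}^N\cdot\nabla\theta^N,P_N\phi)$ is the right instinct if one wants the lemma for the full nonlinear system, but as written it has a gap in the time integrability. The velocity here is not an independent $L^\infty(0,T;L^2)$ field as in $2$D Navier--Stokes: its elliptic component $e^{-\widehat{\beta}z}\nabla\Pi^N$ is controlled, via Theorem \ref{reducedproblem}, only by $\|\nabla\theta^N\|_{L^2}$, which Lemma \ref{lemma3} places merely in $L^2(0,T)$. Pairing an $L^2(0,T;L^4)$ factor with $\theta^N\in L^4(0,T;L^4)$ gives, by H\"older in time ($\tfrac12+\tfrac14=\tfrac34$), only $L^{4/3}(0,T;L^2)$ for the product $\mathbf{u}^N\theta^N$, not the claimed $L^2(0,T;L^2)$; the same obstruction appears if one instead uses Ladyzhenskaya directly, since the worst term is $\|\theta^N\|_{L^2}^{1/2}\|\nabla\theta^N\|_{L^2}^{3/2}$. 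The conclusion your argument actually delivers for the nonlinear system is boundedness of $\{\dot\theta^N\}$ in $L^{4/3}\bigl(0,T;(\widetilde{W}^{1,2}(\Omega_0))^*\bigr)$, which is still enough for compactness but is not the stated $L^2$ bound. To rescue $L^2$ you would need $\mathbf{u}^N\in L^\infty(0,T;L^2)$, which can in fact be obtained by testing the pressure equation against $\Pi^N$ and integrating the right-hand side by parts to get $\|\nabla\Pi^N\|_{L^2}\leq C(\widehat{\beta})\|\theta^N\|_{L^2}$ (using $\widehat{\beta}<2\pi$ and Poincar\'e) --- an estimate that neither your proposal nor the paper establishes.
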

\begin{proof}
	Let $ v\in \widetilde{W}^{1,2}(\Omega_0) $, such that $ v=v_1+v_2 $ with $ v_2\perp \{\phi^i_{mn},\;|\mu|<N\}=\bar{V}_N $ and $ v_1\perp v_2 $. Then from Lemma \ref{lemma4} and Lemma \ref{lemma2}, \ref{lemma3} it follows that
	\begin{equation}\label{key}
		\begin{split}
			(\dot{\theta}^N,v)&=(\dot{\theta}^N,v_1)\\
			&=(w^N,v_1)+(\Delta \theta^N,v_1)\\
			&=(w^N,v_1)-(\nabla\theta^N,\nabla v_1)\\
			&\leq \|w^N\|_{L^2}\|v_1\|_{L^2}+\|\nabla\theta^N\|_{L^2}\|\nabla v_1\|\\
			&\leq \gamma' \|\theta^N\|_{W^{1,2}}\|v_1\|_{L^2}+\|\nabla\theta^N\|_{L^2}\|\nabla v_1\|\\
			&\leq c_3\|v_1\|_{W^{1,2}}.
		\end{split}
	\end{equation}
	Therefore, we have
	\begin{equation}\label{key}
		\dfrac{(\dot{\theta}^N,v)}{\|v_1\|_{W^{1,2}}}\leq c_3,
	\end{equation}
	and passing to the $ \sup $ we obtain
	\begin{equation}\label{key}
		\|\dot{\theta}^N(t)\|_{(\widetilde{W}^{1,2}(\Omega_0))^*}=\sup \dfrac{(\dot{\theta}^N,v)}{\|v_1\|_{W^{1,2}}}\leq c_3.
	\end{equation}
	Finally, the thesis follows integrating the previous equation, i.e.
	\begin{equation}\label{key}
		\|\dot{\theta}^N\|_{L^2(0,T;(\widetilde{W}^{1,2}(\Omega_0))^*)}=\int_0^T\|\dot{\theta}^N(t)\|_{(\widetilde{W}^{1,2}(\Omega_0))^*}dt\leq c_3T.
	\end{equation}
\end{proof}
\noindent
We are now ready to prove the first main preliminary result.
\begin{prop}\label{proposizione2}
	The sequences $ \{\theta^N\}_N $ and $ \{\dot{\theta}^N\}_N $ are weakly relatively sequentially compact. 
\end{prop}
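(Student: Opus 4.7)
The plan is to invoke the standard weak compactness theorem for bounded sequences in reflexive Banach spaces, feeding it the uniform bounds assembled in Lemmas \ref{lemma1}--\ref{lemma5}. First I would package the temperature bounds: Lemma \ref{lemma2} gives $\{\theta^N\}$ bounded in $L^2(0,T;\widetilde{L}^2(\Omega_0))$ and Lemma \ref{lemma3} gives $\{\nabla\theta^N\}$ bounded in $L^2(0,T;\widetilde{L}^2(\Omega_0))$, so together they say that $\{\theta^N\}$ is bounded in the reflexive Hilbert space $L^2(0,T;\widetilde{W}^{1,2}(\Omega_0))$. By Kakutani's theorem (equivalently, the Eberlein--Šmulian characterization of weak sequential compactness in reflexive spaces), one extracts a subsequence, not relabeled, and a limit $\theta\in L^2(0,T;\widetilde{W}^{1,2}(\Omega_0))$ with $\theta^N\rightharpoonup\theta$.

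Next I would repeat the argument for the derivatives. By Lemma \ref{lemma5} the sequence $\{\dot{\theta}^N\}$ is bounded in the reflexive space $L^2(0,T;(\widetilde{W}^{1,2}(\Omega_0))^*)$, so after extracting a further subsequence we get $\dot{\theta}^N\rightharpoonup \eta$ weakly there. Testing against a time-dependent test function $\phi(t)v$ with $\phi\in C_c^\infty(0,T)$ and $v\in \widetilde{W}^{1,2}(\Omega_0)$ and integrating by parts in $t$ identifies $\eta$ with the distributional time derivative of the limit $\theta$, so we may write $\eta=\dot\theta$. For completeness, Lemma \ref{lemma1} bounds $\{\theta^N\}$ in $L^\infty(0,T;\widetilde{L}^2(\Omega_0))$, the dual of the separable space $L^1(0,T;\widetilde{L}^2(\Omega_0))$; a further diagonal extraction together with Banach--Alaoglu delivers weak-$*$ convergence in this space with the same limit $\theta$.

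The only real work is the bookkeeping to ensure that a single common subsequence realizes all of the weak and weak-$*$ limits simultaneously, which is done by the usual successive-extraction diagonal procedure. I do not expect a genuine analytical obstacle here: all of the heavy lifting has already been performed in Proposition \ref{proposizione1} and Lemmas \ref{lemma1}--\ref{lemma5}, and the proposition is essentially a functional-analytic repackaging of those uniform estimates.
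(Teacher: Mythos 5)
Your proposal is correct and follows essentially the same route as the paper: boundedness of $\{\theta^N\}$ in $L^2\bigl(0,T;\widetilde{W}^{1,2}(\Omega_0)\bigr)$ and of $\{\dot{\theta}^N\}$ in $L^2\bigl(0,T;(\widetilde{W}^{1,2}(\Omega_0))^*\bigr)$, followed by weak sequential compactness of bounded sets in reflexive (Hilbert) spaces via Banach--Alaoglu/Eberlein--\v{S}mulian. You are in fact slightly more careful than the paper in citing Lemmas \ref{lemma2} and \ref{lemma3} together for the $W^{1,2}$ bound, and the identification of the weak limit of $\{\dot{\theta}^N\}$ with $\dot{\theta}$, which you include, is deferred in the paper to the corollary immediately following this proposition.
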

\begin{proof}
	First of all, it follows from Lemma \ref{lemma2} that $ \{\theta^N\}_N $ is bounded in $ L^2(0,T;\widetilde{W}^{1,2}(\Omega_0)) $ and from Lemma \ref{lemma5} that $ \{\dot{\theta}^N\}_N $ is bounded in the space $ L^2(0,T;(\widetilde{W}^{1,2}(\Omega_0))^*) $. Hence, the Banach-Alaoglu theorem guarantees that $ \{\theta^N\}_N $ and $ \{\dot{\theta}^N\}_N $ are weakly-* compact in $ L^2(0,T;\widetilde{W}^{1,2}(\Omega_0)) $ (which is Hilbert). Therefore, the two sequences are weakly compact, i.e. there exist two subsequences $ \{\theta^{N_j}\}_j $ and $ \{\dot{\theta}^{N_j}\}_j $ such that
	\begin{equation}\label{key}
		\theta^{N_j}\rightharpoonup \theta\;\;\text{and}\;\; \dot{\theta}^{N_j}\rightharpoonup \psi.
	\end{equation}
\end{proof}
\noindent
\begin{cor}
	It results $ \psi=\dot{\theta} $. 
\end{cor}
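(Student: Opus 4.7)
The plan is the standard identification of a weak limit with a distributional derivative. The two subsequences extracted in Proposition \ref{proposizione2} live in different spaces ($\theta^{N_j}$ in $L^2(0,T;\widetilde{W}^{1,2}(\Omega_0))$ and $\dot\theta^{N_j}$ in $L^2(0,T;(\widetilde{W}^{1,2}(\Omega_0))^*)$), so the limits $\theta$ and $\psi$ are \emph{a priori} unrelated objects; we must show that $\psi$ is in fact the distributional time derivative of $\theta$, which by definition is what $\dot\theta$ denotes.

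First I would fix an arbitrary test function of product form: take $\varphi\in C^\infty_c(0,T)$ and $v\in\widetilde{W}^{1,2}(\Omega_0)$. For each $N_j$ the approximate solution $\theta^{N_j}$ is (at least) absolutely continuous in time with values in a finite-dimensional subspace, so the classical integration-by-parts identity
\begin{equation}\notag
\int_0^T \langle \dot\theta^{N_j}(t),v\rangle\,\varphi(t)\,dt \;=\; -\int_0^T \langle \theta^{N_j}(t),v\rangle\,\dot\varphi(t)\,dt
\end{equation}
holds, where the angle brackets denote the duality pairing between $(\widetilde{W}^{1,2}(\Omega_0))^*$ and $\widetilde{W}^{1,2}(\Omega_0)$ (which reduces to the $L^2$ inner product on the regular side).

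Next I pass to the limit $j\to\infty$ on both sides. The map $u\mapsto \int_0^T \langle u(t),v\rangle\,\varphi(t)\,dt$ is a bounded linear functional on $L^2(0,T;(\widetilde{W}^{1,2}(\Omega_0))^*)$, and similarly the map $u\mapsto -\int_0^T\langle u(t),v\rangle\,\dot\varphi(t)\,dt$ is a bounded linear functional on $L^2(0,T;\widetilde{L}^2(\Omega_0))$. Hence the weak convergences $\dot\theta^{N_j}\rightharpoonup \psi$ and $\theta^{N_j}\rightharpoonup\theta$ guaranteed by Proposition \ref{proposizione2} yield
\begin{equation}\notag
\int_0^T \langle \psi(t),v\rangle\,\varphi(t)\,dt \;=\; -\int_0^T \langle \theta(t),v\rangle\,\dot\varphi(t)\,dt.
\end{equation}
The right-hand side is, by definition of the weak (distributional) time derivative of $\theta$ as an element of $L^2(0,T;(\widetilde{W}^{1,2}(\Omega_0))^*)$, equal to $\int_0^T\langle \dot\theta(t),v\rangle\,\varphi(t)\,dt$.

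Finally, varying $\varphi\in C^\infty_c(0,T)$ and $v\in\widetilde{W}^{1,2}(\Omega_0)$ and using the density of linear combinations of such tensor products in the test space, the fundamental lemma of the calculus of variations gives $\psi(t)=\dot\theta(t)$ in $(\widetilde{W}^{1,2}(\Omega_0))^*$ for a.e.\ $t\in(0,T)$. The only delicate point is making sure the approximate derivatives $\dot\theta^{N_j}$ and the distributional derivative of $\theta^{N_j}$ really agree, but this is immediate because each Galerkin solution is a finite sum of basis functions with $C^1$ (in fact smooth) time coefficients, so there is no ambiguity in the integration by parts above. No further estimates are required beyond those already collected in Lemmas \ref{lemma2} and \ref{lemma5}.
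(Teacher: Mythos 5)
Your argument is correct and coincides with the paper's: both fix a product test function $\varphi(t)v$ (the paper's $f(t)w$), integrate by parts in time at the Galerkin level (the paper justifies this step via Tonelli--Fubini, you via the smoothness in $t$ of the finite-dimensional approximants, which amounts to the same computation), and then pass to the limit on each side using the weak convergences of $\dot\theta^{N_j}$ and $\theta^{N_j}$ from Proposition \ref{proposizione2} to identify $\psi$ with the distributional time derivative of $\theta$. No substantive difference in route; your remark that the two limits are \emph{a priori} unrelated objects is a fair articulation of why the corollary is needed at all.
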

\begin{proof}
	Let us consider $ f(t)\in C^\infty_c(0,T) $ and $ w\in \widetilde{W}^{1,2}(\Omega_0) $, so the product $ fw\in  L^2\bigl(0,T;\widetilde{W}^{1,2}(\Omega_0)\bigr) $. It follows from the Tonelli-Fubini theorem that
	\begin{equation}\label{key}
		\begin{split}
			A^N:&=\int_0^T\left(\int_{\Omega_0}\dot{\theta}^N(t)f(t)w\,d\Omega\right)dt=\int_{\Omega_0}\left(\int_0^T\dot{\theta}^N(t)f(t)w\,dt\right)d\Omega\\
			&=-\int_{\Omega_0}\left(\int_0^T{\theta}^N(t)\dot{f}(t)w\,dt\right)d\Omega=-\int_0^T\left(\int_{\Omega_0}{\theta}^N(t)\dot{f}(t)w\,d\Omega\right)dt\\
			&=:B^N.
		\end{split}
	\end{equation}
	Hence
	\begin{equation}\label{key}
		\lim_NA^N=\int_0^T\left\langle \psi(t),f(t)w \right\rangle=-\int_0^T\left(\theta(t),\dot{f}(t)w\right)dt=\lim_NB^N,
	\end{equation}
	i.e., by definition
	\begin{equation}\label{key}
		\psi=\dot{\theta}. 
	\end{equation}
\end{proof}
\noindent
Let us prove now some other preliminary Lemmas.
\begin{lem}\label{lemma6}
	The sequence $ \{\textbf{u}^N\}_N $ is bounded in $ L^2\bigl(0,T;\widehat{L}^2(\Omega_0)\bigr) $.
\end{lem}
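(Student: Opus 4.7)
The plan is to derive the bound for $\textbf{u}^N$ directly from the first two equations of \eqref{system2ABC} by exploiting the elliptic estimates on $\Pi^N$ provided by Theorem \ref{reducedproblem} and the $L^2$-bounds on $\theta^N$ already obtained in the previous lemmas.

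First, I would start from the identity $\textbf{u}^N = -e^{-\widehat{\beta}z}\nabla\Pi^N + \Ra\,\theta^N\textbf{k}$ which comes from \eqref{system2ABC}$_2$. Taking the $L^2$-norm on the periodicity cell $\Omega_0$ and using the elementary inequality $(a+b)^2 \leq 2a^2+2b^2$ together with $e^{-\widehat{\beta}z}\leq 1$ on $[0,1]$ (since $\widehat{\beta}\geq 0$), this gives the pointwise-in-time estimate
\begin{equation}\notag
\|\textbf{u}^N(t)\|_{L^2}^2 \;\leq\; 2\|\nabla\Pi^N(t)\|_{L^2}^2 + 2\Ra^2\|\theta^N(t)\|_{L^2}^2.
\end{equation}

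Next I would apply the elliptic estimate \eqref{estimeciao} of Theorem \ref{reducedproblem} to $\Pi^N$, noting that $\Pi^N$ solves the reduced problem with right-hand side $f = \theta_z^N$ (via \eqref{system2ABC}$_1$). This yields
\begin{equation}\notag
\|\nabla\Pi^N(t)\|_{L^2} \;\leq\; \frac{1}{2\pi-\widehat{\beta}}\bigl\|e^{\widehat{\beta}z}\theta_z^N(t)\bigr\|_{L^2} \;\leq\; \frac{e^{\widehat{\beta}}}{2\pi-\widehat{\beta}}\|\nabla\theta^N(t)\|_{L^2},
\end{equation}
so the whole estimate reduces to a combination of $\|\nabla\theta^N\|_{L^2}^2$ and $\|\theta^N\|_{L^2}^2$.

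Finally, integrating on $(0,T)$ I obtain
\begin{equation}\notag
\|\textbf{u}^N\|_{L^2(0,T;\widehat{L}^2(\Omega_0))}^2 \;\leq\; \frac{2\,e^{2\widehat{\beta}}}{(2\pi-\widehat{\beta})^2}\|\nabla\theta^N\|_{L^2(0,T;\widetilde{L}^2(\Omega_0))}^2 + 2\Ra^2\|\theta^N\|_{L^2(0,T;\widetilde{L}^2(\Omega_0))}^2,
\end{equation}
and the right-hand side is bounded uniformly in $N$ thanks to Lemma \ref{lemma2} and Lemma \ref{lemma3}. There is essentially no obstacle here: the only delicate point is to make sure that the constraint $0\leq\widehat{\beta}<2\pi$ assumed in Theorem \ref{reducedproblem} is in force (so the prefactor $1/(2\pi-\widehat{\beta})$ is finite), which is exactly the standing hypothesis of the well-posedness setup. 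The lemma therefore follows by collecting a constant $c_4>0$ depending on $\Ra$, $\widehat{\beta}$, $T$, $c_1$ and $c_2$ only.
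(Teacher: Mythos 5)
Your proposal is correct and follows essentially the same route as the paper: bound $\|\textbf{u}^N\|_{L^2}$ pointwise in time via the Darcy identity \eqref{system2ABC}$_2$, control $\|\nabla\Pi^N\|_{L^2}$ through the elliptic estimate \eqref{estimeciao} applied with right-hand side proportional to $\theta_z^N$, and integrate in time invoking Lemmas \ref{lemma2} and \ref{lemma3}. If anything, you are more careful than the paper about the factor $2$ from $(a+b)^2\leq 2a^2+2b^2$ and about the exponential weight; the only cosmetic discrepancy is that the forcing in \eqref{system2ABC}$_1$ is $\Ra e^{\widehat{\beta}z}\theta_z^N$, so $f=\Ra\,\theta_z^N$ rather than $\theta_z^N$, which only changes the constant.
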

\begin{proof}
	It is easy to see that
	\begin{equation}\label{key}
		\begin{split}
			\|\textbf{u}^N(t)\|_{L^2}^2&\leq \|\nabla\Pi^N\|_{L^2}^2+\Ra^2\|\theta^N\|_{L^2}^2\\
			&\leq c\|\theta_z^N(t)\|_{L^2}^2+\Ra\|\theta^N(t)\|_{L^2}^2\\
			&\leq \hat{c} \|\theta^N(t)\|^2_{W^{1,2}}
		\end{split}
	\end{equation}
	and integrating, recalling Lemma \ref{lemma2}, \ref{lemma3} we obtain
	\begin{equation}\label{key}
		\int_0^T\|\textbf{u}^N(t)\|_{L^2}^2dt\leq \hat{c}\int_0^T\|\theta^N(t)\|^2_{W^{1,2}}dt\leq  c_4. 
	\end{equation}
\end{proof}

\begin{lem}\label{lemma7}
	The sequence $ \{\nabla\Pi^N\}_N $ is bounded in $ L^2\bigl(0,T;\widetilde{L}^2(\Omega_0)\bigr) $.
\end{lem}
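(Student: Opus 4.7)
The plan is to obtain a pointwise (in $t$) bound on $\|\nabla \Pi^N(t)\|_{L^2}$ in terms of $\|\nabla \theta^N(t)\|_{L^2}$ by applying the elliptic estimate of Theorem \ref{reducedproblem}, and then integrate in time using the $L^2(0,T)$ bound on $\nabla\theta^N$ already established in Lemma \ref{lemma3}.

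More precisely, I would first observe that, at each fixed $t$, the pressure $\Pi^N(\cdot, t)$ solves the reduced elliptic problem \eqref{redprob} in the form
\begin{equation*}
\Delta \Pi^N - \widehat{\beta}\Pi^N_z = e^{\widehat{\beta}z}(\Ra\,\theta^N_z), \qquad \Pi^N_z\big|_{z=0,1} = 0,
\end{equation*}
with right-hand side $f = \Ra\,\theta^N_z \in \widetilde{L}^2(\Omega_0)$, and hypothesis $0 \leq \widehat{\beta} < 2\pi$ is in force. Therefore Theorem \ref{reducedproblem} applies and gives
\begin{equation*}
\|\nabla \Pi^N(t)\|_{L^2} \leq \frac{1}{2\pi - \widehat{\beta}} \|e^{\widehat{\beta}z}\Ra\,\theta^N_z(t)\|_{L^2} \leq \frac{\Ra\, e^{\widehat{\beta}}}{2\pi - \widehat{\beta}} \|\nabla \theta^N(t)\|_{L^2},
\end{equation*}
where I used that $z \in [0,1]$ so $e^{\widehat{\beta}z} \leq e^{\widehat{\beta}}$ and $|\theta^N_z| \leq |\nabla\theta^N|$.

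Squaring both sides and integrating over $(0,T)$ yields
\begin{equation*}
\|\nabla \Pi^N\|^2_{L^2(0,T;\widetilde{L}^2(\Omega_0))} \leq \left(\frac{\Ra\, e^{\widehat{\beta}}}{2\pi - \widehat{\beta}}\right)^{\!2} \int_0^T \|\nabla \theta^N(t)\|^2_{L^2}\,dt.
\end{equation*}
By Lemma \ref{lemma3} the right-hand integral is bounded by $c_2$, uniformly in $N$, so setting
\begin{equation*}
c_5 := \left(\frac{\Ra\, e^{\widehat{\beta}}}{2\pi - \widehat{\beta}}\right)^{\!2} c_2
\end{equation*}
completes the argument.

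There is no genuine obstacle here: the lemma is a direct consequence of the already-proven elliptic regularity estimate \eqref{estimeciao} together with the space-time bound on $\nabla\theta^N$. The only minor point to be careful about is to verify that the hypothesis $\widehat{\beta} < 2\pi$ is exactly what makes the prefactor $(2\pi - \widehat{\beta})^{-1}$ finite, so the whole scheme rests on the slight-compressibility regime assumed throughout the paper.
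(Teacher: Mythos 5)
Your proof is correct and follows essentially the same route as the paper's: the paper also derives the pointwise bound $\|\nabla\Pi^N(t)\|^2_{L^2}\leq c\|\nabla\theta^N(t)\|^2_{L^2}$ from the elliptic estimate \eqref{estimeciao} of Theorem \ref{reducedproblem} and then integrates in time using Lemma \ref{lemma3}. You merely make the constant $c=\bigl(\Ra\, e^{\widehat{\beta}}/(2\pi-\widehat{\beta})\bigr)^{2}$ explicit, which the paper leaves unspecified.
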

\begin{proof}
	The proof immediately follows from the fact that
	\begin{equation}\label{key}
		\|\nabla\Pi^N(t)\|^2_{L^2}\leq c\|\nabla\theta^N(t)\|^2_{L^2},
	\end{equation}
	and after integrating and exploiting Lemma \ref{lemma3}.
\end{proof}

\begin{cor}
	The sequence $ \{\Pi^N\}_N $ is bounded in $ L^2\bigl(0,T;\widetilde{L}^2(\Omega_0)\bigr) $.
\end{cor}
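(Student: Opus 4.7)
The plan is short: this corollary is a direct consequence of Lemma \ref{lemma7} combined with the Poincaré inequality already recorded in the preliminaries. No new estimate on the system needs to be performed; the work is purely functional-analytic.

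First I would observe that, by construction, each Galerkin approximant $\Pi^N(\cdot,t)$ is a finite linear combination of elements of the basis $\mathcal{B}$ and therefore belongs to $\widetilde{W}^{1,2}(\Omega_0)$ for every $t\in(0,T)$. Since $\mathcal{B}$ consists (up to the trivial mode) of zero-mean functions, the first Poincaré inequality stated in Section~3.1 applies pointwise in time, giving
\begin{equation}\notag
\|\Pi^N(t)\|_{L^2}\leq \frac{1}{2\pi}\|\nabla\Pi^N(t)\|_{L^2}\quad \text{for a.e. } t\in(0,T).
\end{equation}

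Next I would square both sides and integrate on $(0,T)$, obtaining
\begin{equation}\notag
\|\Pi^N\|_{L^2(0,T;\widetilde{L}^2(\Omega_0))}^2=\int_0^T\|\Pi^N(t)\|_{L^2}^2\,dt\leq \frac{1}{4\pi^2}\int_0^T\|\nabla\Pi^N(t)\|_{L^2}^2\,dt,
\end{equation}
and then invoke Lemma \ref{lemma7}, which controls the right-hand side by a constant independent of $N$. Combining these two bounds yields the claimed uniform bound for $\{\Pi^N\}_N$.

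There is no genuine obstacle here: the only point worth checking is that the Poincaré constant $\tfrac{1}{2\pi}$ is legitimately available for the Galerkin iterates, which holds automatically because $\Pi^N$ is expanded in the zero-mean basis $\mathcal{B}$ and the inequality therefore transfers from the infinite-dimensional space to its finite-dimensional truncations.
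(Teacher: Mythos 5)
Your proof is correct and follows exactly the same route as the paper: the paper's proof is the one-line remark that since $\langle \Pi\rangle=0$, the claim follows from Lemma \ref{lemma7} and the Poincar\'e inequality, which is precisely the argument you spell out (pointwise-in-time Poincar\'e on the zero-mean Galerkin iterates, then squaring and integrating in time). Your version merely makes explicit the zero-mean justification and the integration step that the paper leaves implicit.
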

\begin{proof}
	Since $ \langle \Pi \rangle =0 $, then the proof follows from Lemma \ref{lemma7} and the Poincaré inequality. 
\end{proof}
We are now ready to prove the second main preliminary result.
\begin{prop}\label{proposizione3}
	The sequences $ \{\textbf{u}^N\}_N $ and $ \{\Pi^N\}_N $ are weakly relatively sequentially compact. 
\end{prop}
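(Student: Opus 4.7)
The plan mirrors the Banach--Alaoglu argument already carried out in the proof of Proposition~\ref{proposizione2}. First, I would note that all the ambient spaces involved are separable Hilbert spaces, and hence reflexive, so that any norm-bounded sequence admits a weakly convergent subsequence.

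For $\{\textbf{u}^N\}$, Lemma~\ref{lemma6} provides boundedness in $L^2\bigl(0,T;\widehat{L}^2(\Omega_0)\bigr)$. Reflexivity together with the Banach--Alaoglu theorem then yields a subsequence $\{\textbf{u}^{N_j}\}$ and a limit $\textbf{u}\in L^2\bigl(0,T;\widehat{L}^2(\Omega_0)\bigr)$ with $\textbf{u}^{N_j}\rightharpoonup \textbf{u}$. For $\{\Pi^N\}$, I would combine Lemma~\ref{lemma7} with the Corollary that immediately follows it to obtain boundedness in $L^2\bigl(0,T;\widetilde{W}^{1,2}(\Omega_0)\bigr)$, and then apply Banach--Alaoglu once more to extract (after relabeling along a further subsequence, via a standard diagonal procedure) a weak limit $\Pi\in L^2\bigl(0,T;\widetilde{W}^{1,2}(\Omega_0)\bigr)$ with $\Pi^{N_j}\rightharpoonup \Pi$.

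The only care required is bookkeeping: the subsequence must be chosen to be compatible with the one already extracted in Proposition~\ref{proposizione2} for $\{\theta^N\}$ and $\{\dot{\theta}^N\}$, so that in the subsequent step one can pass to the limit in the full nonlinear coupled system along a single index. This is a routine diagonal extraction. As a complement to mere weak compactness, one should eventually identify the limits $(\textbf{u},\Pi,\theta)$ as being coupled by the linear identities $\textbf{u} = -e^{-\widehat{\beta}z}\nabla\Pi + \Ra\,\theta\textbf{k}$ and $\Delta\Pi-\widehat{\beta}\Pi_z = \Ra\,e^{\widehat{\beta}z}\theta_z$; these pass through the weak limit because they are linear in the Galerkin approximations, and the uniqueness part of Theorem~\ref{reducedproblem} then pins down $\Pi$ from $\theta$. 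Since the statement is a direct consequence of the \emph{a priori} bounds already at hand, there is no genuine analytic obstacle: the proof is structurally identical to that of Proposition~\ref{proposizione2}, and the only conceptual point is remembering that reflexivity of the Hilbert target space reduces weak-$*$ compactness to weak compactness.
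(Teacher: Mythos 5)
Your proposal is correct and follows essentially the same route as the paper: the authors likewise invoke the boundedness from Lemma~\ref{lemma6} and Lemma~\ref{lemma7} (with its Corollary) and then repeat the Banach--Alaoglu/reflexivity argument of Proposition~\ref{proposizione2} to extract weakly convergent subsequences in $L^2\bigl(0,T;\widehat{L}^2(\Omega_0)\bigr)$ and $L^2\bigl(0,T;\widetilde{W}^{1,2}(\Omega_0)\bigr)$. Your additional remarks on diagonal extraction and on identifying the limit functions are sensible housekeeping that the paper defers to later results, but they do not change the argument.
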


\begin{proof}
	The result can be proved analogously as in Proposition \ref{proposizione2}. Indeed, we obtain that there exists two subsequences
	\begin{equation}\label{key}
		\textbf{u}^{N_j}\rightharpoonup \textbf{u}\;\;\text{and}\;\; {\Pi}^{N_j}\rightharpoonup \Pi,
	\end{equation}
	weakly convergent in $ L^2\bigl(0,T;\widehat{L}^2(\Omega_0)\bigr) $ and $ L^2\bigl(0,T;\widetilde{W}^{1,2}(\Omega_0)\bigr) $ respectively. 
\end{proof}
\noindent
The following result concern the existence and uniqueness os solutions for the linear version of \eqref{system2ABC}. 
\begin{prop}\label{proposizione4}
	$ (\theta, \textbf{u},\Pi) $ is a weak solution of the linear version of system \eqref{system2ABC}.
\end{prop}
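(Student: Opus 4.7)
The plan is to pass to the limit in the Galerkin formulation of the linear version of \eqref{system2ABC}, using the weak convergences established in Propositions \ref{proposizione2} and \ref{proposizione3} together with the corollary on $\{\Pi^N\}_N$. By construction, the Galerkin approximants $(\theta^N,\textbf{u}^N,\Pi^N)$ satisfy, on the projected subspaces spanned by $\{\xi^i_{m,n}\}_{|\mu|\le N}$ and $\{\phi^i_{m,n}\}_{|\mu|\le N}$, the identities
\begin{equation}\notag
(\dot{\theta}^N,\xi)=(w^N,\xi)-(\nabla\theta^N,\nabla\xi),\quad (\textbf{u}^N,\boldsymbol{\eta})=-(e^{-\widehat{\beta}z}\nabla\Pi^N,\boldsymbol{\eta})+\Ra(\theta^N\textbf{k},\boldsymbol{\eta}),
\end{equation}
\begin{equation}\notag
(\nabla\Pi^N,\nabla\phi)+\widehat{\beta}(\Pi^N_z,\phi)=-\Ra(e^{\widehat{\beta}z}\theta^N_z,\phi),
\end{equation}
for every test $\xi\in \widehat{W}^{1,2}$, $\boldsymbol{\eta}\in \widehat{\mathcal{W}}^{1,2}$, $\phi\in \widetilde{W}^{1,2}$ in the respective finite-dimensional subspaces.

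The first step is to fix arbitrary test functions of the form $f(t)\xi$, $f(t)\boldsymbol{\eta}$, $f(t)\phi$ with $f\in C^\infty_c(0,T)$ and $\xi,\boldsymbol{\eta},\phi$ in a \emph{fixed} finite truncation of the respective bases; for $N$ sufficiently large these belong to the Galerkin subspaces, so the approximate identities hold. Integrating in $t$ and using the weak convergences
\begin{equation}\notag
\theta^{N_j}\rightharpoonup \theta,\quad \dot{\theta}^{N_j}\rightharpoonup \dot{\theta},\quad \textbf{u}^{N_j}\rightharpoonup\textbf{u},\quad \Pi^{N_j}\rightharpoonup \Pi,
\end{equation}
in the respective $L^2(0,T;\cdot)$ spaces, each term is a bounded linear functional applied to the weakly convergent sequence, hence the limit identities
\begin{equation}\notag
\int_0^T(\dot\theta,\xi)f\,dt=\int_0^T\bigl[(w,\xi)-(\nabla\theta,\nabla\xi)\bigr]f\,dt,
\end{equation}
\begin{equation}\notag
\int_0^T(\textbf{u},\boldsymbol{\eta})f\,dt=\int_0^T\bigl[-(e^{-\widehat{\beta}z}\nabla\Pi,\boldsymbol{\eta})+\Ra(\theta\textbf{k},\boldsymbol{\eta})\bigr]f\,dt,
\end{equation}
\begin{equation}\notag
\int_0^T\bigl[(\nabla\Pi,\nabla\phi)+\widehat{\beta}(\Pi_z,\phi)\bigr]f\,dt=-\Ra\int_0^T(e^{\widehat{\beta}z}\theta_z,\phi)f\,dt,
\end{equation}
follow. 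A density argument on the truncated bases, combined with the fact that $C^\infty_c(0,T)$ is dense in $L^2(0,T)$, extends the identities to arbitrary test functions in $L^2(0,T;\widehat{W}^{1,2})$, $L^2(0,T;\widehat{\mathcal{W}}^{1,2})$, and $L^2(0,T;\widetilde{W}^{1,2})$ respectively, which is precisely the definition of weak solution of the linear version of \eqref{system2ABC}.

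The only delicate step is verifying the initial condition $\theta(0)=\theta_0$, which cannot be read off from the distributional equation alone. I would handle it in the standard way: exploit the continuous embedding $L^2(0,T;\widetilde{W}^{1,2})\cap H^1(0,T;(\widetilde{W}^{1,2})^*)\hookrightarrow C([0,T];L^2)$ to give pointwise-in-time meaning to $\theta(0)$, then integrate the weak equation against $f\in C^\infty([0,T])$ with $f(T)=0$ and $f(0)=1$, and compare with the Galerkin identity integrated by parts in $t$. The Galerkin initial data satisfy $\theta^N(0)\to \theta_0$ strongly in $L^2$, so the boundary terms pass to the limit and yield $\theta(0)=\theta_0$. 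The main obstacle of the whole argument is precisely this continuity-in-time / trace-at-$t=0$ step; the passage to the limit in the interior of $(0,T)$ is straightforward because every term of the linear problem depends \emph{linearly} on the unknowns and no strong convergence is required.
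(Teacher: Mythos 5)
Your argument follows essentially the same route as the paper: pass to the limit in the finite-dimensional Galerkin identities using the weak convergences of Propositions \ref{proposizione2} and \ref{proposizione3}, extend by density and the Fundamental Lemma of the Calculus of Variations, and recover the initial datum via the embedding into $C([0,T];\widetilde{L}^2(\Omega_0))$ together with integration by parts against test functions vanishing at $t=T$. The only item you omit is the uniqueness argument the paper appends to this same proof (the energy estimate \eqref{davide} applied to the difference of two solutions with zero initial data forces $\theta_1=\theta_2$, then Theorem \ref{reducedproblem} gives $\Pi_1=\Pi_2$ and the constitutive relation gives $\textbf{u}_1=\textbf{u}_2$); since the proposition as stated only asserts that the limit is a weak solution, this is a difference in scope rather than a gap.
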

\begin{proof}
	Let us consider
	\begin{equation}
		v=\sum_{|\mu| \leq M}v_{mn}^i(t)\phi_{mn}^i \in \widetilde{W}^{1,2}(\Omega_0).
	\end{equation}
	Let $ N>M $, then we have that
	\begin{equation}
		\int_{\Omega_0}\dot{\theta}^N(t)\phi^i_{mn}d\Omega=\int_{\Omega_0}w^N \phi_{mn}^id\Omega-\int_{\Omega_0}\nabla \theta^N\nabla \phi_{mn}^id\Omega	
	\end{equation}
	and integrating we obtain
	\begin{equation}
		\begin{split}
			\int_0^T\left( \int_{\Omega_0} \dot{\theta}^N(t)v(t) \right)dt=&\int_0^T \left( \int_{\Omega_0}w^n (t)v(t) d\Omega \right)dt-	\\&\qquad\qquad	\int_0^T\left(\int_{\Omega_0} \nabla \theta^N(t)\cdot \nabla v(t)d\Omega \right)dt.
		\end{split}
	\end{equation}
	By Proposition \ref{proposizione1}-\ref{proposizione2}, it follow that 
	\begin{equation}
		\begin{split}
			\lim_N\langle \dot{\theta}^N,v \rangle &= \lim_N \int_0^T\left(\int_{\Omega_0} \dot{\theta}^N(t) v(t) d\Omega \right)dt=\langle\dot{\theta},v\rangle, \\ 	
			\lim_N(w^N,v)&=\lim_N \int_0^T\left(\int_{\Omega_0} w^N(t) v(t) d\Omega \right)dt=(w,v),\\
			\lim_N(\nabla\theta^N,\nabla v)&=\lim_N \int_0^T\left(\int_{\Omega_0} \nabla\theta^N\cdot \nabla v d\Omega \right)dt=(\nabla\theta,\nabla v).
		\end{split} 
	\end{equation}
	Therefore, $\forall M, \forall v\in L^2(0,T,\widetilde{W}^{1,2}(\Omega_0))$ we have that
	\begin{equation}\label{eqdue}
		\langle\dot{\theta},v\rangle=(w,v)-(\nabla \theta,\nabla v)
	\end{equation}
	Let us choose $v=\varphi g$, $\varphi \in C_c^\infty(0,T)$, $g\in\widetilde{W}^{1,2}(\Omega_0)$, hence \eqref{eqdue} becomes
	\begin{equation}
		\langle\dot{\theta}, \varphi g\rangle=(w,\varphi g)-(\nabla \theta, \nabla(\varphi g))=(w,\varphi g)-(\nabla \theta, \varphi\nabla g)
	\end{equation}
	If we integrate we obtain
	\begin{equation}
		\int_0^T\langle\dot{\theta}(t), \varphi(t) g\rangle dt=\int_0^T(w(t),\varphi(t) g)dt-\int_0^T(\nabla \theta(t), \varphi(t)\nabla g)dt
	\end{equation}
	i.e.
	\begin{equation}
		\int_0^T \left[ \langle\dot{\theta}(t), g\rangle -(w(t),g)+(\nabla \theta(t),\nabla g) \right]\varphi (t)dt=0
	\end{equation}
	It follows from the Fundamental Lemma of Calculus of Variations that
	\begin{equation}\label{eqquattro}
		\langle\dot{\theta}, g\rangle -(w(t),g)+(\nabla \theta(t),\nabla g) =0,
	\end{equation}
	for a.e. $ t\in (0,T) $ and for all $ g\in \widetilde{W}^{1,2}(\Omega_0) $.\\
	We can apply the same procedure to \eqref{system2ABC}$ _1 $ and \eqref{system2ABC}$ _2 $. Now, recalling that
	$\theta\in L^2(0,T,\widetilde{W}^{1,2}(\Omega_0))$ and $\dot{\theta}\in L^2(0,T,(\widetilde{W}^{1,2}(\Omega_0))^*)$ it follows that, \cite[Thm, 5.9.3]{evans2022partial}
	\begin{equation}\label{key}
		\theta \in C(\left[ 0,T \right],\widetilde{L}^2(\Omega_0)),
	\end{equation}	
	At this point, we have that exists $ \theta(\text{\textbf{x}},0)$. We need now to prove that it is indeed $\theta_0$. To this aim, let us consider $v \in C^1(\left[0,T\right],\widetilde{W}^{1,2}(\Omega_0))$ such that $v(T)=0$. Hence
	\begin{equation}
		\int_0^T \langle\dot{\theta}^N,v\rangle dt=\int_0^T (w^N(t),v(t))dt-\int_0^T (\nabla \theta^N(t),\nabla v(t))dt,
	\end{equation}
	so
	\begin{equation}
		\begin{split}
			&-\int_0^T \left(\int_{\Omega_0} \theta^N(t,x)\dot{v}(t,x)d\Omega\right)dt-\int_{\Omega_0}\theta^N(0)v(0)d\Omega\\
			&\qquad\qquad\qquad\qquad=\int_0^T (w^N(t),v(t))dt-\int_0^T (\nabla \theta^N(t), \nabla v(t)).		
		\end{split}
	\end{equation}
	Then, if we take the limit we obtain
	\begin{equation}\label{equno}
		\begin{split}
			&\lim_N\left(-\int_0^T\left(\int_{\Omega_0}\theta^N(t)\dot{v}(t)d\Omega\right) dt\right)= -\int_0^T (\theta(t), \dot{v}(t)) -(\theta_0,v(0))\\&\qquad\qquad\qquad\qquad= \int_0^T (w(t),v(t))dt-\int_0^T (\nabla \theta(t), \nabla v(t)).
		\end{split}
	\end{equation}
	Moreover, from \eqref{eqdue}, we have $ \langle\dot{\theta},v\rangle=(w,v)-(\nabla \theta, \nabla v) $, but since
	\begin{equation}\label{eqtre}
		(\theta, \dot{v})-(\theta(0),\dot{v})-(\theta(0),v(0))=(w,v)-(\nabla \theta, \nabla v),
	\end{equation}
	from \eqref{equno} and \eqref{eqtre}, we have 
	\begin{equation}
		(\theta(0)-\theta_0, v_0)=0,
	\end{equation}
	i.e. 
	\begin{equation}\label{inizvox}
		\theta(0)=\theta_0 .
	\end{equation}
	Equation \eqref{eqquattro} together with \eqref{inizvox} $\theta$ tells us that $ \theta $ is a weak solution of \eqref{system2ABC}$ _3 $. We can apply the same procedure to equations \eqref{system2ABC}$ _1 $ and $ _2 $.\\
	Concerning the uniqueness, let us suppose that $ (\theta_1,\textbf{u}_1,\pi_1) $ and $ (\theta_2, \textbf{u}_2, \pi_2)$ are two solutions to system \eqref{system2ABC} such that
	\begin{equation}
		\theta=\theta_1-\theta_2, \quad \textbf{u}=\textbf{u}_1-\textbf{u}_2, \quad \pi=\pi_1-\pi_2.
	\end{equation}
	Since $(\theta, \textbf{u}, \pi)$ solve the system, with zero initial data, from \eqref{davide} we have that 
	\begin{equation}
		E(t)=\frac{\Ra}{2}\|\theta(t)\|_{L^2}^2\leq \frac{\Ra}{2}\|\theta_0\|^2_{L^2}e^{c_0t}=0.
	\end{equation}
	Therefore, the following chain of equivalence holds:
	\begin{equation}
		\|\theta(t)\|^2_{L^2}=0\;\; \Longleftrightarrow\;\; \theta(t)=0\;\; \Longleftrightarrow\;\; \theta_1-\theta_2=0 \;\;\Longleftrightarrow\;\; \theta_1=\theta_2.
	\end{equation}
	Form Theorem \ref{reducedproblem} we immediately have that $\Pi_1=\Pi_2$ and finally
	\begin{equation}
		\textbf{u}_1=-e^{-\hat{\beta} z}\nabla \Pi_1+R\theta_1 \textbf{k}=-e^{-\hat{\beta}z }\nabla \Pi_2+R \theta_2 \textbf{k}=\textbf{u}_2.
	\end{equation}
\end{proof}

\subsection{The main theorem}

The following preliminary results are needed in order to prove the existence and uniqueness of solutions of the full nonlinear system \eqref{system2ABC}. 

\begin{lem}\label{lemma8}
	The following estimate holds
	\begin{equation}\label{key}
		\|\nabla\textbf{u}\|_{L^2}\leq (c(\widehat{\beta})+\Ra)\|\nabla\theta\|_{L^2}
	\end{equation}	
\end{lem}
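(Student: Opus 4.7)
The plan is to take the gradient of the velocity representation given by \eqref{system2ABC}$_2$ and control each resulting term via the pressure estimates from Theorem \ref{reducedproblem}, combined with the equivalence of norms between $D^2$ and $\Delta$ recalled in the preliminaries.

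First I would compute $\nabla \textbf{u}$ componentwise from $\textbf{u} = -e^{-\widehat{\beta}z}\nabla\Pi + \Ra\,\theta\,\textbf{k}$. Differentiating, the derivative of the exponential factor produces a $\widehat{\beta} e^{-\widehat{\beta}z}\,\textbf{k}\otimes\nabla\Pi$ term, so that schematically
\begin{equation*}
\nabla\textbf{u} = -e^{-\widehat{\beta}z}\,D^2\Pi + \widehat{\beta} e^{-\widehat{\beta}z}\,\textbf{k}\otimes\nabla\Pi + \Ra\,\textbf{k}\otimes\nabla\theta.
\end{equation*}
Taking $L^2$ norms, using $e^{-\widehat{\beta}z}\leq 1$ on $[0,1]$ and the triangle inequality, one obtains
\begin{equation*}
\|\nabla\textbf{u}\|_{L^2} \leq \|D^2\Pi\|_{L^2} + \widehat{\beta}\|\nabla\Pi\|_{L^2} + \Ra\,\|\nabla\theta\|_{L^2}.
\end{equation*}

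Next I would control the two pressure-dependent terms. Since $\Pi$ solves the reduced elliptic problem \eqref{system2ABC}$_1$, Theorem \ref{reducedproblem} applied with $f=\Ra\,\theta_z$ yields
\begin{equation*}
\|\nabla\Pi\|_{L^2}\leq \frac{\Ra\,e^{\widehat{\beta}}}{2\pi-\widehat{\beta}}\,\|\theta_z\|_{L^2},\qquad \|\Delta\Pi\|_{L^2}\leq \frac{2\pi\,\Ra\,e^{\widehat{\beta}}}{2\pi-\widehat{\beta}}\,\|\theta_z\|_{L^2},
\end{equation*}
after bounding $e^{\widehat{\beta}z}\leq e^{\widehat{\beta}}$ on the layer. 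The equivalence of norms stated in the preliminaries then gives $\|D^2\Pi\|_{L^2}\leq \tfrac{1}{4}\|\Delta\Pi\|_{L^2}$, so $\|D^2\Pi\|_{L^2}$ is likewise controlled by $\|\theta_z\|_{L^2}\leq\|\nabla\theta\|_{L^2}$.

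Assembling the three contributions, the coefficients of $\|\nabla\theta\|_{L^2}$ from the $D^2\Pi$ and $\nabla\Pi$ terms depend only on $\widehat{\beta}$ (through the prefactors $\tfrac{\pi\,e^{\widehat{\beta}}}{2(2\pi-\widehat{\beta})}$ and $\tfrac{\widehat{\beta}\,e^{\widehat{\beta}}}{2\pi-\widehat{\beta}}$, multiplied by $\Ra$), while the last term contributes the clean $\Ra\,\|\nabla\theta\|_{L^2}$. Factoring gives the desired inequality with $c(\widehat{\beta})$ absorbing all $\Ra$-independent and $\widehat{\beta}$-dependent constants. The only non-routine point is keeping the exponential factor $e^{-\widehat{\beta}z}$ under control when differentiating and correctly routing it through the estimate for $\|\nabla\Pi\|_{L^2}$; since $\widehat{\beta}<2\pi$ by hypothesis of Theorem \ref{reducedproblem}, this causes no trouble, and no genuine obstacle is expected.
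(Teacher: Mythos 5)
Your proposal is correct and follows essentially the same route as the paper: differentiate the velocity representation $\textbf{u}=-e^{-\widehat{\beta}z}\nabla\Pi+\Ra\theta\textbf{k}$, bound the exponential weight, and control $\|\nabla\Pi\|_{L^2}$ and $\|D^2\Pi\|_{L^2}$ via Theorem \ref{reducedproblem} applied with $f=\Ra\,\theta_z$ together with the $D^2$--$\Delta$ norm equivalence. If anything, you track the factor $\Ra$ arising from $f=\Ra\,\theta_z$ in the pressure terms more explicitly than the paper's own chain of inequalities does.
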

\begin{proof}
	Let us consider the divergence of \eqref{system2ABC}$ _{2} $, i.e.
	\begin{equation}\label{key}
		\nabla \textbf{u}=\nabla \left(-e^{-\widehat{\beta}z}\nabla\Pi\right)+\nabla (\Ra\theta\textbf{k}),
	\end{equation}
	in particular
	\begin{equation}\label{key}
		\nabla\textbf{u}=\left(\begin{array}{c}
			0\\
			\widehat{\beta}e^{-\widehat{\beta}z}\nabla\Pi
		\end{array}\right)-e^{-\widehat{\beta}z}D^2\Pi+\Ra\left(\begin{array}{c}
			0\\
			\nabla\theta
		\end{array}\right).
	\end{equation}
	We can then estimate as follows
	\begin{equation}\label{key}
		\begin{split}
			\|\nabla\textbf{u}\|_{L^2}&\leq \widehat{\beta}\|\nabla\Pi\|_{L^2}+e^{\widehat{\beta}}\|D^2\Pi\|+\Ra\|\nabla\theta\|_{L^2}\\
			&\leq \dfrac{\widehat{\beta}e^{\widehat{\beta}}}{2\pi -\widehat{\beta}}\|\nabla\theta\|_{L^2}+\dfrac{e^{\widehat{\beta}}}{4}\|\nabla\Pi\|_{L^2}+\Ra\|\nabla\theta\|_{L^2}\\
			&\leq \dfrac{\widehat{\beta}e^{\widehat{\beta}}}{2\pi -\widehat{\beta}}\|\nabla\theta\|_{L^2}+\dfrac{\pi e^{2\widehat{\beta}}}{2(2\pi-\widehat{\beta})}\|\nabla\theta\|_{L^2}+\Ra\|\nabla\theta\|_{L^2}\\
			&=(c(\widehat{\beta})+\Ra)\|\nabla\theta\|_{L^2}
		\end{split}
	\end{equation}
\end{proof}

\begin{prop}
	For all $ \varphi\in \widetilde{W}^{1,2}(\Omega_0) $, we have that
	\begin{equation}\label{key}
		\lim_N\int_0^T\left(\textbf{u}^N\cdot \nabla\theta^N-\textbf{u}\cdot\nabla\theta,\varphi\right)dt=0
	\end{equation}
\end{prop}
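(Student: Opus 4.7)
The key difficulty is that the weak convergences established in Propositions~\ref{proposizione2} and \ref{proposizione3} do not suffice to pass to the limit in the trilinear product $(\textbf{u}^N\cdot\nabla\theta^N,\varphi)$: at least one of the factors $\textbf{u}^N$, $\nabla\theta^N$ must be promoted to strong convergence in a Lebesgue space compatible with the other two. My plan proceeds in four steps: strong convergence of $\theta^N$ via the Aubin--Lions--Simon lemma; transfer of this strong convergence to $\textbf{u}^N$ through the pressure Poisson equation; an $L^4$ interpolation upgrade via Ladyzhenskaya; and a final integration by parts exploiting the divergence-free character of the Galerkin velocities.

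First, Lemmas~\ref{lemma2}--\ref{lemma3} bound $\{\theta^N\}_N$ in $L^2(0,T;\widetilde W^{1,2}(\Omega_0))$ while Lemma~\ref{lemma5} bounds $\{\dot\theta^N\}_N$ in $L^2(0,T;(\widetilde W^{1,2}(\Omega_0))^*)$; since the embedding $\widetilde W^{1,2}(\Omega_0)\hookrightarrow \widetilde L^2(\Omega_0)$ is compact, Aubin--Lions--Simon gives $\theta^N\to\theta$ strongly in $L^2(0,T;\widetilde L^2(\Omega_0))$ along a subsequence. Second, since $\Pi^N-\Pi$ satisfies
\begin{equation*}
\Delta(\Pi^N-\Pi)-\widehat{\beta}(\Pi^N-\Pi)_z=\Ra\, e^{\widehat{\beta}z}(\theta^N-\theta)_z
\end{equation*}
with homogeneous Neumann data, I would test against $\Pi^N-\Pi$ and integrate by parts in $z$ on the right-hand side, shifting the derivative off $\theta^N-\theta$; the boundary contributions vanish thanks to $\theta^N=\theta=0$ at $z=0,1$, and the mixed term $\widehat{\beta}\int(\Pi^N-\Pi)_z(\Pi^N-\Pi)$ is controlled by $\tfrac{\widehat{\beta}}{2\pi}\|\nabla(\Pi^N-\Pi)\|_{L^2}^2$ via Cauchy--Schwarz and Poincaré. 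Absorption on the left is legitimate because $\widehat{\beta}<2\pi$, and produces the upgraded estimate $\|\nabla(\Pi^N-\Pi)\|_{L^2}\leq C(\widehat{\beta},\Ra)\|\theta^N-\theta\|_{L^2}$. Integrating in time and invoking the first step yields strong convergence of $\nabla\Pi^N$ in $L^2(0,T;L^2(\Omega_0))$, and the algebraic identity $\textbf{u}^N=-e^{-\widehat{\beta}z}\nabla\Pi^N+\Ra\theta^N\textbf{k}$ transfers this to $\textbf{u}^N\to\textbf{u}$ in the same space.

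Third, Lemma~\ref{lemma8} bounds $\{\nabla\textbf{u}^N\}_N$ in $L^2(0,T;L^2(\Omega_0))$, so combining strong $L^2$-convergence with Ladyzhenskaya's inequality delivers
\begin{equation*}
\|\textbf{u}^N-\textbf{u}\|_{L^2(0,T;L^4)}^2\leq c\,\|\textbf{u}^N-\textbf{u}\|_{L^2(L^2)}\,\|\nabla(\textbf{u}^N-\textbf{u})\|_{L^2(L^2)}\to 0,
\end{equation*}
and analogously $\theta^N\to\theta$ strongly in $L^2(0,T;L^4(\Omega_0))$. Fourth, taking the divergence of $\textbf{u}^N=-e^{-\widehat{\beta}z}\nabla\Pi^N+\Ra\theta^N\textbf{k}$ and using \eqref{system2ABC}$_1$ yields $\nabla\cdot\textbf{u}^N=0$ identically (and $\nabla\cdot\textbf{u}=0$ by passage to the weak limit). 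Combined with $w^N|_{z=0,1}=0$ and $x$-periodicity, integration by parts gives
\begin{equation*}
\int_0^T(\textbf{u}^N\cdot\nabla\theta^N-\textbf{u}\cdot\nabla\theta,\varphi)\,dt=-\int_0^T(\textbf{u}^N\theta^N-\textbf{u}\theta,\nabla\varphi)\,dt.
\end{equation*}
Splitting $\textbf{u}^N\theta^N-\textbf{u}\theta=(\textbf{u}^N-\textbf{u})\theta^N+\textbf{u}(\theta^N-\theta)$ and applying Hölder in space with exponents $(4,4,2)$ bounds the absolute value of the right-hand side by $\|\nabla\varphi\|_{L^2}\bigl(\|\textbf{u}^N-\textbf{u}\|_{L^2(L^4)}\|\theta^N\|_{L^2(L^4)}+\|\textbf{u}\|_{L^2(L^4)}\|\theta^N-\theta\|_{L^2(L^4)}\bigr)$, which tends to zero by the third step.

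The main obstacle I foresee is the upgrade in the second step: applied directly, Theorem~\ref{reducedproblem} only yields $\|\nabla(\Pi^N-\Pi)\|_{L^2}\leq C\|\theta^N_z-\theta_z\|_{L^2}$, which is merely a weak-to-weak statement and therefore useless for improving convergence. The decisive device is the integration by parts on the source of the Poisson problem, which shifts the $z$-derivative onto the test function; this is legitimate thanks to the Dirichlet condition $\theta=0$ on $z=0,1$, and coercive thanks to the standing hypothesis $\widehat{\beta}<2\pi$.
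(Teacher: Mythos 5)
Your proof is correct, but it takes a genuinely different route from the paper at the decisive step, and in doing so it actually fills a gap the paper leaves open. The paper splits the difference as $(\textbf{u}^N-\textbf{u})\cdot\nabla\theta^N+\textbf{u}\cdot\nabla(\theta^N-\theta)$, handles the second term by weak convergence of $\nabla\theta^N$ against the fixed function $\varphi\,\textbf{u}\in L^2$, and for the first term simply asserts that \vir{by Rellich--Kondrachov} $\textbf{u}^N\to\textbf{u}$ strongly in $L^2$, then uses Ladyzhenskaya together with Lemmas \ref{lemma3}, \ref{lemma6}, \ref{lemma8}. That appeal to Rellich--Kondrachov is the weak point: compactness of the spatial embedding alone does not give strong convergence in $L^2(0,T;L^2)$, and the velocity carries no direct bound on its time derivative, so the compactness must be routed through $\theta^N$. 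Your argument supplies exactly this: Aubin--Lions--Simon on $\{\theta^N\}$ (using Lemma \ref{lemma5} for $\dot\theta^N$), followed by the transfer to $\nabla\Pi^N$ via the elliptic pressure problem, where the integration by parts on the source term is the essential trick to obtain $\|\nabla(\Pi^N-\Pi)\|_{L^2}\leq C\|\theta^N-\theta\|_{L^2}$ rather than an estimate in terms of $\|\nabla(\theta^N-\theta)\|_{L^2}$, and finally to $\textbf{u}^N$ through the constitutive relation. Your endgame also differs: you integrate by parts using $\nabla\cdot\textbf{u}^N=0$ to move the derivative onto $\varphi$ and close with H\"older in $L^4$--$L^4$--$L^2$ and the Ladyzhenskaya interpolation in $L^2(0,T;L^4)$, whereas the paper keeps $\nabla\theta^N$ and pairs it with $(\textbf{u}^N-\textbf{u})\varphi$; both closings work once strong convergence of $\textbf{u}^N$ is in hand, but yours tracks the time integration more carefully. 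The only caveats, which apply equally to the paper's own argument, are that the Galerkin pressure satisfies the projected rather than the exact Poisson equation (so $\nabla\cdot\textbf{u}^N=0$ and your elliptic comparison hold only up to the projection error, which one must argue is harmless), and that all limits are along the subsequence extracted in Propositions \ref{proposizione2}--\ref{proposizione3}.
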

\begin{proof}
	Let us notice that from the Ladyzhenskaya inequality and Lemmas \ref{lemma3}, \ref{lemma6}, \ref{lemma8} it follows that
	\begin{equation}\label{gepps}
		\begin{split}
			\|(\textbf{u}^N-\textbf{u})\varphi\|_{L^2}\|\nabla\theta^N\|_{L^2}&\leq \|\textbf{u}^N-\textbf{u}\|_{L^4}\|\varphi\|_{L^4}\|\nabla\theta^N\|_{L^2}\\
			&\leq \|\textbf{u}^N-\textbf{u}\|_{L^2}^{1/2}\|\textbf{u}^N-\textbf{u}\|_{L^2}^{1/2}\|\varphi\|_{L^4}\|\nabla\theta^N\|_{L^2},
		\end{split}
	\end{equation}
	in particular from the the Rellich-Kondrachov theorem we have that $ \textbf{u}^N\rightarrow \textbf{u} $ in $ L^2 $, and hence from \eqref{gepps} we have that
	\begin{equation}\label{cippp}
		\lim_N \|(\textbf{u}^N-\textbf{u})\varphi\|_{L^2} =0.
	\end{equation}
	Moreover it is easy to see that
	\begin{equation}\label{cioppp}
		\lim_N|(\nabla(\theta^N-\theta),\varphi\textbf{u})|=0,
	\end{equation}
	since $ \theta^N\rightharpoonup\theta $ in $ L^2(0,T;\widetilde{W}^{1,2}(\Omega_0)) $. 
	Now since
	\begin{equation}\label{cippecciop}
		\begin{split}
			&|((\textbf{u}^N-\textbf{u})\cdot \nabla\theta^N,\varphi)|+|(\textbf{u}\cdot \nabla(\theta^N-\theta),\varphi)|\\
			&\qquad\qquad\qquad\qquad\leq \|(\textbf{u}^N-\textbf{u})\varphi\|_{L^2}\|\nabla\theta^N\|_{L^2}+|(\nabla(\theta^N-\theta),\varphi\textbf{u})|,		
		\end{split}
	\end{equation}
	we deduce from \eqref{cippp}, \eqref{cioppp} and \eqref{cippecciop} that
	\begin{equation}\label{key}
		\begin{split}
			&\lim_N\int_0^T\left(\textbf{u}^N\cdot \nabla\theta^N-\textbf{u}\cdot\nabla\theta,\varphi\right)dt\\
			&\qquad\qquad\qquad=\lim_N\int_0^T\left((\textbf{u}^N-\textbf{u})\cdot \nabla\theta^N+\textbf{u}\cdot \nabla (\theta^N-\theta),\varphi\right)dt=0.	
		\end{split}
	\end{equation}
\end{proof}
The following final Theorem eventually prove the existence and uniqueness of the nonlinear perturbation system \eqref{system2ABC}. 
\begin{theorem}
	$ (\theta, \textbf{u},\Pi) $ is a weak solution of the full nonlinear system \eqref{system2ABC}.
\end{theorem}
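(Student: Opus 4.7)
My plan is to mimic the Galerkin argument used in Proposition \ref{proposizione4} for the linear problem, with the only new ingredient being the nonlinear convective term, whose passage to the limit is already supplied by the preceding proposition. Concretely, I start from the Galerkin approximants $(\theta^N,\textbf{u}^N,\Pi^N)$ whose weak convergence is granted by Propositions \ref{proposizione2}--\ref{proposizione3}. For any test function $v=\sum_{|\mu|\le M}v^i_{mn}(t)\phi^i_{mn}$ with $N>M$, the projected temperature equation reads
\begin{equation*}
\int_0^T\langle\dot\theta^N,v\rangle\,dt+\int_0^T(\textbf{u}^N\cdot\nabla\theta^N,v)\,dt=\int_0^T(w^N,v)\,dt-\int_0^T(\nabla\theta^N,\nabla v)\,dt.
\end{equation*}
The three linear terms are handled exactly as in Proposition \ref{proposizione4}, while the nonlinear term passes to the limit thanks to the previous proposition; localising in time via $v=\varphi g$ with $\varphi\in C^\infty_c(0,T)$ and $g\in\widetilde W^{1,2}(\Omega_0)$, the fundamental lemma of calculus of variations then yields the weak equation
\begin{equation*}
\langle\dot\theta,g\rangle+(\textbf{u}\cdot\nabla\theta,g)-(w,g)+(\nabla\theta,\nabla g)=0
\end{equation*}
for a.e.\ $t\in(0,T)$ and all $g\in\widetilde W^{1,2}(\Omega_0)$.

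The remaining two equations \eqref{system2ABC}$_1$, \eqref{system2ABC}$_2$ are linear and contain no time derivative, so passing to the limit there is immediate from Proposition \ref{proposizione3}; these also guarantee that the limit $\Pi$ is, for a.e.\ $t$, the solution of the reduced elliptic problem of Theorem \ref{reducedproblem} with datum $\theta_z(t)$, and that $\textbf{u}=-e^{-\widehat\beta z}\nabla\Pi+\Ra\theta\textbf{k}$. Attainment of the initial datum $\theta(0)=\theta_0$ follows verbatim from the argument used in Proposition \ref{proposizione4}: since $\theta\in L^2(0,T;\widetilde W^{1,2}(\Omega_0))$ with $\dot\theta\in L^2(0,T;(\widetilde W^{1,2})^{\ast})$ we have $\theta\in C([0,T];\widetilde L^2)$, and testing against $v\in C^1([0,T];\widetilde W^{1,2})$ with $v(T)=0$ and comparing with the integrated-by-parts version of the weak equation identifies $\theta(0)$ with $\theta_0$ (the added nonlinear term is continuous in the strong $L^2$ convergence of $\textbf{u}^N$ given by Rellich--Kondrachov and so does not affect the identification).

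The genuinely new point, and the place I expect the technical work to concentrate, is uniqueness. Given two solutions $(\theta_i,\textbf{u}_i,\Pi_i)$ I set $\theta=\theta_1-\theta_2$, $\textbf{u}=\textbf{u}_1-\textbf{u}_2$, $\Pi=\Pi_1-\Pi_2$, which solves a homogeneous problem with zero data except for the residual nonlinearity $\textbf{u}_1\cdot\nabla\theta_1-\textbf{u}_2\cdot\nabla\theta_2=(\textbf{u}\cdot\nabla)\theta_1+(\textbf{u}_2\cdot\nabla)\theta$. Testing the temperature equation with $\Ra\,\theta$, the second piece vanishes because $\nabla\cdot\textbf{u}_2=0$ together with the boundary conditions, while the first is controlled by Ladyzhenskaya's inequality and Lemma \ref{lemma8}:
\begin{equation*}
|((\textbf{u}\cdot\nabla)\theta_1,\theta)|\le\|\textbf{u}\|_{L^4}\|\nabla\theta_1\|_{L^2}\|\theta\|_{L^4}\le c\,\|\textbf{u}\|_{L^2}^{1/2}\|\nabla\textbf{u}\|_{L^2}^{1/2}\|\nabla\theta_1\|_{L^2}\|\theta\|_{L^2}^{1/2}\|\nabla\theta\|_{L^2}^{1/2}.
\end{equation*}
Combining with the a priori identity of Proposition \ref{proposizione1}, absorbing the factors of $\|\nabla\theta\|_{L^2}$ by Young's inequality into the dissipative term $\Ra\|\nabla\theta\|^2_{L^2}$ (using Lemma \ref{lemma8} to bound $\|\nabla\textbf{u}\|_{L^2}$ by $\|\nabla\theta\|_{L^2}$), I obtain a differential inequality of the form $\tfrac{d}{dt}\|\theta\|^2_{L^2}\le \Lambda(t)\|\theta\|^2_{L^2}$ with $\Lambda\in L^1(0,T)$ thanks to $\nabla\theta_1\in L^2(0,T;L^2)$. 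Gr\"onwall with zero initial datum forces $\theta\equiv 0$, and uniqueness of $\Pi$ and $\textbf{u}$ is then read off from Theorem \ref{reducedproblem} and \eqref{system2ABC}$_2$ exactly as at the end of Proposition \ref{proposizione4}.
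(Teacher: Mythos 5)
Your existence argument follows the paper's own route: Galerkin approximants, the compactness of Propositions \ref{proposizione2}--\ref{proposizione3}, the limit passage of Proposition \ref{proposizione4} for the linear terms, and the preceding proposition for the convective term. The paper phrases the scheme in stream-function form and, before any limit is taken, observes that the truncated problem is a \emph{nonlinear} system of ODEs whose local solvability, combined with the a priori bound of Proposition \ref{proposizione1} to extend the solution to all of $[0,T]$, is what guarantees that the approximants exist in the first place. You take the existence of $(\theta^N,\textbf{u}^N,\Pi^N)$ for granted; for the nonlinear system this is no longer automatic and is the one step of the existence proof you should not skip.

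The genuine gap is in your uniqueness argument (which the paper does not actually carry out for the nonlinear problem, so you are on your own there). With the bounds you invoke, namely $\|\nabla\textbf{u}\|_{L^2}\leq C\|\nabla\theta\|_{L^2}$ from Lemma \ref{lemma8} and $\|\textbf{u}\|_{L^2}\leq C\|\theta\|_{W^{1,2}}\leq C'\|\nabla\theta\|_{L^2}$ from Lemma \ref{lemma6} and Poincar\'e, Ladyzhenskaya gives $\|\textbf{u}\|_{L^4}\leq C\|\nabla\theta\|_{L^2}$, so your estimate reads $|((\textbf{u}\cdot\nabla)\theta_1,\theta)|\leq C\|\nabla\theta\|_{L^2}^{3/2}\|\theta\|_{L^2}^{1/2}\|\nabla\theta_1\|_{L^2}$. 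Young's inequality with exponents $4/3$ and $4$ then absorbs $\|\nabla\theta\|_{L^2}^{2}$ into the dissipation but leaves $\Lambda(t)=C\|\nabla\theta_1(t)\|_{L^2}^{4}$, and the a priori estimates only give $\nabla\theta_1\in L^2(0,T;L^2)$, not $L^4$; so $\Lambda\in L^1(0,T)$ does \emph{not} follow as you claim, and Gr\"onwall cannot be applied. The repair exploits the Darcy structure: testing the pressure equation for the difference with $\Pi$ and integrating the right-hand side by parts (using $\theta=0$ on $z=0,1$ and periodicity in $x$) yields $\|\nabla\Pi\|_{L^2}\leq C\|\theta\|_{L^2}$, hence $\|\textbf{u}\|_{L^2}\leq C\|\theta\|_{L^2}$ and $\|\textbf{u}\|_{L^4}\leq C\|\theta\|_{L^2}^{1/2}\|\nabla\theta\|_{L^2}^{1/2}$; the convective term is then bounded by $C\|\theta\|_{L^2}\|\nabla\theta\|_{L^2}\|\nabla\theta_1\|_{L^2}$, and Young with exponent $2$ produces $\Lambda(t)=C\|\nabla\theta_1(t)\|_{L^2}^{2}\in L^1(0,T)$ by Lemma \ref{lemma3}, after which Gr\"onwall closes the argument as you intended.
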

\begin{proof}
	Let un consider \eqref{system2ABC} in stream function $ \Phi $ formulation
	\begin{equation}\label{system2ABCSTREAM}
		\begin{cases}
			\Delta \Pi-\widehat{\beta}\Pi_z=\mathsf{Ra} e^{\widehat{\beta}z} \theta_z,\\
			\Delta \Phi=-\widehat{\beta}e^{-\widehat{\beta}z}\Pi_x+\mathsf{Ra} \theta_x,\\
			\theta_t+\textbf{u}\cdot \nabla\theta=\Phi_x+\Delta \theta.
		\end{cases}
	\end{equation}
	together with boundary conditions
	\begin{equation}\label{key}
		\theta=\Pi_z=\Delta \Phi=0,\quad z=0,1,
	\end{equation}
	where $ \textbf{u}=(-\Phi_z,\Phi_x) $, and let us set
	\begin{equation}\label{labella}
		\begin{split}
			\theta^N(x,z,t)&=\sum_{i=\pm 1 \atop m,n=0}^{N}A^i_{m,n}(t)\xi^i_{mn}(x,z),\\
			\Pi^N(x,z,t)&=\sum_{i=\pm 1 \atop m,n=0}^{N}B^i_{m,n}(t)\phi^i_{mn}(x,z),\\
			\Phi^N(x,z,t)&=\sum_{i=\pm 1 \atop m,n=0}^{N}C^i_{m,n}(t)\xi^i_{mn}(x,z).
		\end{split}
	\end{equation}
	Following the same procedure as in \cite{de2019existence}, substituting \eqref{labella} in \eqref{system2ABCSTREAM}, one find a non-linear system of ODEs whose solution is unique from the Peano's theorem. Once the existence and uniqueness of the truncated solution \eqref{labella} is proved, then the passage to the limit as $ N\rightarrow +\infty $ is guaranteed by the existence of converging subsequences provided in the previous preliminary results. 
\end{proof}

\bibliographystyle{unsrt}
\bibliography{main}

\end{document}